\def\pf{\noindent{\it Proof. }}
\newcommand{\B}{{\mathbb B}}
\newcommand{\EE}{\mathscr E}
\newcommand{\N}{{\mathbb N}}
\newcommand{\NN}{\mathscr N}
\newcommand{\R}{{\mathbb R}}
\newcommand{\Sp}{{\mathbb S}}
\newcommand{\T}{{\mathbb T}}
\newcommand{\Z}{{\mathbb Z}}
\newcommand{\RR}{\mathbb{R}}
\newcommand{\BB}{\mathbb{B}}
\newcommand{\G}{\mathbb{G}}
\newcommand{\g}{\mathfrak{g}}
\providecommand{\abs}[1]{\lvert#1\rvert}
\providecommand{\norm}[1]{\lVert#1\rVert}
\newtheorem{theorem}{Theorem}[section]
\newtheorem{lemma}{Lemma}[section]
\newtheorem{proposition}{Proposition}[section]
\begin{document}

\begin{titlepage}

\title{On Weyl's asymptotics and remainder term for the orthogonal and unitary groups}

\author{Charles Morris, Ali Taheri$^\dag$}

\date{}

\end{titlepage}

\maketitle

\begin{abstract}
We examine the asymptotics of the spectral counting function of a compact Riemannian 
manifold by V.G.~Avakumovic \cite{Avakumovic} and L.~H\"ormander \cite{Hormander-eigen} and 
show that for the scale of orthogonal and unitary groups ${\bf SO}(N)$, 
${\bf SU}(N)$, ${\bf U}(N)$ and ${\bf Spin}(N)$ it is not sharp. While for negative sectional curvature 
improvements are possible and known, {\it cf.} e.g., J.J.~Duistermaat $\&$ 
V.~Guillemin \cite{Duist-Guill}, here, we give sharp and contrasting examples in the positive Ricci curvature 
case [non-negative for ${\bf U}(N)$]. Furthermore here the improvements are sharp and 
quantitative relating to the dimension and {\it rank} of the group. We discuss the implications 
of these results on the closely related problem of closed geodesics and the length spectrum.
\end{abstract}
\vspace{10pt}

{\bf Keywords:} Weyl's law, spectral counting function, remainder term, compact Lie groups, closed geodesics, Twist maps.

{\bf MSC (2010):} 58C40, 58Jxx, 22Cxx, 42Bxx 

\section{Introduction}
\setcounter {equation}{0}

Let $(M^d, g)$ be a $d$ dimensional compact Riemannian manifold without boundary and let $-\Delta_g$ denote 
the Laplace-Beltrami operator on $M^d$. The spectral counting function of $-\Delta_g$ on $M^d$ is the function 
defined on 
$(0, \infty)$ by 
\footnote{In the sequel when the choice of $M^d$ is clear from the context, or when 
there is no danger of confusion, we suppress the dependence on $M^d$ and simply write $\mathscr{N}=\mathscr{N}(\lambda)$.} 
\begin{equation} \label{spectral-counting-function-equation}
\mathscr{N}(\lambda; M^d) = \sharp \{j \ge 0 : \lambda_j \le \lambda \}, \qquad \lambda > 0.
\end{equation}
Here $0=\lambda_0 < \lambda_1 \le \lambda_2 \le  ... $ denote the eigen-values of $-\Delta_g$ in ascending order where by 
basic spectral theory each eigen-value has a finite multiplicity while $\lambda_j \nearrow \infty$ as $j \nearrow \infty$. 
The description of the asymptotics of the spectral counting function has been the subject of numerous investigations and is a 
result of a sequence of improvements and refinements starting originally from the seminal works of H.~Weyl in 1911 
describing the leading term and then gradually sharpening the form and order of the remainder term through the works of various 
authors most notably B.M.~Levitan \cite{Levitan}, V.G.~Avakumovic \cite{Avakumovic} and L.~H\"ormander \cite{Hormander-eigen}. 
(See also the monographs V.~Ivrii \cite{Ivrii-2} and M.A.~Shubin \cite{Shubin}.) 
For compact boundaryless manifolds the celebrated Avakumovic-H\"ormander-Weyl asymptotics has the from
\begin{equation} \label{Weyl-law-remainder-general-equation} 
\mathscr{N}(\lambda; M^d) = \frac{{\rm Vol}_g(M^d) \omega_d}{(2\pi)^d} \lambda^\frac{d}{2} + O \left( \lambda^\frac{d-1}{2} \right), 
\qquad \lambda \nearrow \infty,
\end{equation}
where ${\rm Vol}_g(M^d)$ is the volume of $M^d$ with respect to $dv_g$ and $\omega_d$ is the volume of the unit 
$d$-ball in the Euclidean space $\R^d$, that is, $\omega_d=|\B^d_1|$. 
The formulation of (\ref{Weyl-law-remainder-general-equation}) is originally due to V.G.~Avakumovic \cite{Avakumovic} and later on 
L.~H\"ormander \cite{Hormander-eigen}, who by invoking the theory of Fourier integral operators and the wave equation gave a proof 
that simultaneously extends to operators of arbitrary order ({\it see} \cite{Duist-Horm, Horm}, \cite{Hormander} Vols.~{\bf 3}-{\bf 4} or 
\cite{Shubin}). That the remainder term is sharp can be seen by examining, e.g., Euclidean spheres or projective spaces (see below 
for more on this) whilst in contrast, for flat tori, the problem directly relates to counting integer lattice points and is far from sharp. 
Indeed recall that for ${\mathbb T}^d = \R^d /\Z^d$ the eigen-functions $(\varphi_\alpha : \alpha \in \Z^d)$ are characterised by 
\begin{align}
-\Delta_g \varphi_\alpha &= 4 \pi^2 |\alpha|^2 \varphi_\alpha, \qquad \varphi_\alpha = e^{2\pi i \alpha \cdot x},  \\
\alpha &= (\alpha_1, ..., \alpha_d), \qquad |\alpha|^2 = \alpha_1^2 + ... + \alpha^2_d,  \nonumber 
\end{align}
and so as a result for $\lambda>0$ and with $\chi$ the characteristic function of the closed ball centred at the origin 
and radius $r=\sqrt \lambda/2\pi$ we have 
\footnote{For further improvements of this classical result of E.~Hlawka \cite{Hlawka} and more on the lattice point problem see 
F.~Chamizo $\&$ H.~Iwaniec \cite{ChamIwan}, D.R.~Heath-Brown \cite{HB}, M.N.~Huxley \cite{Huxley}, A.~Walfisz \cite{Wal} 
as well as F.~Fricker \cite{Fricker}. See also Section \ref{SecTwo} and Table \ref{table delta} below.}
\begin{align} \label{flat-tori-scf-equation}
\mathscr{N}(\lambda; {\mathbb T}^d) &= \sharp \bigg\{x \in \Z^d : 4 \pi^2 |x|^2 \le \lambda \bigg\}
= \sum_{x \in \Z^d} \chi (x) \nonumber \\
&= \frac{{\rm Vol}_g({\mathbb T}^d) \omega_d}{(2\pi)^d} \lambda^\frac{d}{2} + 
O \left( \lambda^{\frac{d-1}{2} - \frac{1}{2} \frac{d-1}{d+1}} \right). 
\end{align}

A natural question thus is when is the remainder term in (\ref{Weyl-law-remainder-general-equation}) sharp 
and if the sharpness of this term carries any geometric information. To elaborate on this further consider the half-wave equation 
\begin{equation} 
\left \{ \begin {array}{ll}
\partial_t u + i A u =0 & \textrm{$t \in \R$,} \\
\qquad \quad \,\,\,\,\, u=f & \textrm{$t=0$,}   
\end{array} \right .  
\end{equation} 
associated with $A=\sqrt{-\Delta_g}$ in $L^2(M)$. Then $u(t) = e^{-it A} f$ 
and so upon taking traces and with $\mathscr{N}_A(\mu) = \sharp \{j : \mu_j=\sqrt{\lambda_j} \le \mu\}=\mathscr{N}(\mu^2)$ we have
\begin{equation}
{\rm Tr} \left( e^{- it \sqrt{-\Delta_g}} \right) =  \sum e^{-it \sqrt{\lambda_j}} = \widehat{d\mathscr{N}_A} (t/2\pi).
\end{equation}

The half-wave propagator $e^{-it \sqrt{-\Delta_g}}$ can be expressed as a Fourier integral operator and in view of the above identity its 
trace is the Fourier transform of the measure $d \mathscr{N}_A$. Thus by basic considerations the asymptotics of this trace near $t=0$ translates 
via a Fourier inversion (a trivial but more revealing Tauberian theorem in this context) to the asymptotics of the spectral counting function 
as $\lambda \nearrow \infty$.

Now consideration of this Fourier integral operator, its canonical relation and the Lagrangian flow associated to its principal symbol leads 
to the geodesic flow on the cotangent bundle $T^\star M$ of $M$ (note that it is precisely here that one needs to deal with the first order 
operator $\sqrt{-\Delta_g}$). The periodic orbits of this Lagrangian flow are the periodic geodesics on $M$ and the resulting length spectrum 
contains the singular support of the distributional trace 
\begin{equation}
\rho(t)={\rm Tr}(e^{-i t \sqrt{-\Delta_g}})=\sum_{j} e^{-it \sqrt{\lambda_j}}, \qquad -\infty<t<\infty.  
\end{equation}

Indeed the analysis by L.~H\"ormander of the big singularity of $\rho$ at $t=0$ leads to the trace formula of 
J.J.~Duistermaat $\&$ V.~Guillemin \cite{Duist-Guill}
\begin{equation}
\sum_j h(\mu - \mu_j) \cong \frac{1}{(2\pi)^d} \sum_{k=0}^{d-1} c_k \mu^{d-1-k}, \qquad c_k = \int_{M^d} \omega_k, \qquad \mu \nearrow \infty,
\end{equation}
where $\mu_j=\sqrt{\lambda_j}$ and $h \in {\mathcal S}(\R)$ with ${\rm supp}\, \hat h \subset [-\varepsilon, \varepsilon]$ while $h \equiv 1$ 
in a suitably small neighbourhood of zero and
$\omega_k$ smooth real-valued densities on $M^d$ associated to the metric $g$. (In particular we have $c_0={\rm Vol}(\B^\star M)$ 
the volume of the unit ball in the co-tangent bundle.)

The sharpness of the remainder term in (\ref{Weyl-law-remainder-general-equation}) now 
connects directly with the structure of the spectrum $(\lambda_j: j \ge 0)$ and the nature of the geodesic flow. For example in case of Euclidean 
spheres, real or complex projective spaces or more generally compact rank one symmetric spaces the spectrum clusters, the geodesic flow is 
periodic and the remainder term in (\ref{Weyl-law-remainder-general-equation}) is sharp. However, and in contrast, for spaces with non-positive sectional 
curvature or more generally spaces with measure-theoretically {\it few} periodic geodesics the remainder term in 
(\ref{Weyl-law-remainder-general-equation}) is {\it not} sharp and can be improved 
to $O(\lambda^{(d-1)/2}/\log \lambda)$ and $o(\lambda^{(d-1)/2})$ respectively. 
({\it See} P.~Berard \cite{Berard}, J.J.~Duistermaat $\&$ V.~Guillemin \cite{Duist-Guill} as well as \cite{AwonusikaTah}, 
V.~Ivrii \cite{Ivrii-2}, C.~Sogge \cite{Sogge-3} for more.)

In this paper motivated, by the significance of the period geodesics on the Lie group ${\mathbb G}={\bf SO}(N)$ in providing twist solutions 
to certain geometric problems in the calculus of variations ({\it see} \cite{Shah-Tah, Taheri, Taheri-NoDEA}) we take a closer look at the 
geodesic length spectrum and the spectral counting function and examining the sharpness of the remainder term in Weyl's law 
(\ref{Weyl-law-remainder-general-equation}) for this case as well as the cases of the unitary and spinor groups. 
By analogy with the case of symmetric spaces and in contrast to the negative curvature case above, one expects, in virtue of positivity of 
${\rm Ric}(\G)$, that again the remainder term is sharp, however, we show this {\it not} to be the case. As a prototype example for the special 
orthogonal group ${\bf SO}(N)$ apart from $N=3$ where the geodesic flow is periodic -- note that ${\bf SO}(3) \cong {\mathbb P}(\R^3)$ is a 
rank one symmetric space -- the remainder term in (\ref{Weyl-law-remainder-general-equation}) is not sharp and can be quantitatively 
improved.

Indeed the spectral counting function $\mathscr{N}=\mathscr{N}(\lambda; \G)$ of the orthogonal and unitary 
group $\G$ equipped with a bi-invariant metric $g$ has the asymptotics:
 \begin{equation}
\mathscr{N}(\lambda; \G) = \frac{\omega_d {\rm Vol}_g(\G)}{(2\pi)^d} \lambda^\frac{d}{2} 
+ O \left(\lambda^{\frac{1}{2} \left[ d -1 - \varepsilon \right]  } \right), 
\qquad \lambda \nearrow \infty, 
\end{equation}
where $d={\rm dim}(\G)$, $n={\rm rank}(\G)$ ({\it see} Tables \ref{table vol 1} $\&$ \ref{table vol 2} below) and $\varepsilon=\varepsilon(n) 
\ge 0$. In fact we show that $\varepsilon=1$ when $n \ge 5$ and more generally $\varepsilon= (n-1)/(n+1)$ when $n \ge 1$. In particular 
when ${\rm rank}(\G) \ge 2$ the Avakumovic-H\"ormander-Weyl remainder term in \eqref{Weyl-law-remainder-general-equation} is not 
sharp and as direct calculation (as in Section \ref{SecTwo} below) reveals this is precisely when the geodesic flow of $\G$ {\it fails} to be 
periodic. More interestingly when ${\rm rank}(\G) \ge 5$ the exponent of $\lambda$ in the remainder term 
\eqref{Weyl-law-remainder-general-equation} can be improved to $(d-2)/2$ which is sharp. 

Let us end this introduction by giving a brief plan of the paper. In Section \ref{SecTwo} we go over the main results and tools from the representation 
theory of compact Lie groups, in particular the computation of the Casimir spectrum, root systems and the analytic weights of irreducible 
representations followed by calculations relating to periodic geodesics and the length spectrum. In the interest of brevity and for the sake of 
definiteness the discussion here is confined to the prototype case of the special orthogonal group ${\bf SO}(N)$. In Sections \ref{SecThree} and 
\ref{SecFour} we give detailed analysis of the spectral counting function and its asymptotics for the scale of special orthogonal, unitary and spinor 
groups. Finally in Section \ref{SecFive} we present and prove the sharper form of the asymptotics of $\mathscr{N}=\mathscr{N}(\lambda; \G)$ as 
highlighted in the discussion above for $n \ge 5$.

\section{Weyl chambers and spectral multiplicities for the Lie group ${\mathbb G}={\bf SO}(N)$ with $N \ge 2$}\label{SecTwo}
\setcounter {equation}{0}

In this section we gather together some of the technical apparatus for computing and describing the spectrum and spectral 
multiplicities of the Laplace-Beltrami operator required later for the 
development of the paper. \footnote{The reader is referred to the monographs \cite{BD, Helgason} 
and \cite{Knapp} for further details and the jargon on Lie groups and their representations.} 
The Cartan-Killing form on the special orthogonal group ${\bf SO}(N)$ corresponds to 
the bilinear form $B(X,Y)=(N-2){\rm tr}(X Y)$ with $X, Y \in {\mathfrak s}{\mathfrak o}(N)$. In virtue of the semisimplicity 
of ${\bf SO}(N)$ the latter leads to an inner product on the Lie algebra ${\bf {\mathfrak s}{\mathfrak o}}(N)$, here, taking 
the explicit form 
\begin{equation} \label{Cartan-Killing-metric-equation}
(X,Y) = -\frac{B(X, Y)}{2(N-2)} = \frac{1}{2} {\rm tr}(XY^t) = \frac{1}{2} {\rm tr}(X^tY). 
\end{equation}
This inner product in turn results in a bi-variant metric on ${\bf SO}(N)$ that from now on is the choice of Riemannian metric. 
Specifically using left translations this gives the Adjoint invariant metric on $\G={\bf SO}(N)$ defined via, 
\begin{align}
\rho(X_g,Y_g) &= \frac{1}{2} {\rm tr}((g^{-1}X_g)^tg^{-1}Y_g) =\frac{1}{2} {\rm tr}(X^tY).
\end{align}
As the metric is bi-invariant the Riemannian exponential and the Lie exponential coincide (see, e.g., S.~Helgason \cite{Helgason}) 
and so the geodesic $\gamma=\gamma(t)$ starting at $g\in \G$ in the direction $X_g \in T_g \G$ is given by $\gamma(t) = g \exp(tX)$ 
where $X\in \g$ is such that $X_g = g X$. Now we fix the maximal torus $\T$ on $\G$ by setting, for even $N$,
\begin{equation}
\T = \bigg\{ g \in {\bf SO}(2n) : \mbox{$g={\rm diag}({\mathcal R}_1, {\mathcal R}_2, ..., {\mathcal R}_n)$} \bigg\},
\end{equation}
with the $2 \times 2$ rotation blocks ${\mathcal R}_j$ ($1 \le j \le n)$ in ${\bf SO}(2)$ given by ${\mathcal R}_j = {\rm exp}\,{\mathcal J}_j$:
\begin{equation*}
{\mathcal R}_j = {\mathcal R}(a_j) = 
\begin{bmatrix} 
\cos a_j & -\sin a_j \\
\sin a_j &  \quad \cos a_j
\end{bmatrix}, 
\qquad 
{\mathcal J}_j = {\mathcal J} (a_j) = \begin{bmatrix}
0 & -a_j \\
a_j & \quad 0
\end{bmatrix}, 
\end{equation*}
($a_j \in \R$) and the usual adjustment for odd $N$, namely, $n$, $2 \times 2$ block as above and a last $1 \times 1$ block 
consisting of entry $1$, specifically, 
\begin{equation}
\T = \bigg\{ g \in {\bf SO}(2n+1) : \mbox{$g={\rm diag}({\mathcal R}_1, {\mathcal R}_2, ..., {\mathcal R}_n, 1)$} \bigg\}.
\end{equation}
The subalgebra $\mathfrak{t} \subset \g$ corresponding to the maximal torus $\T \subset \G$ for even $N$ is given by,
\begin{equation} \label{subalgebra-t}
\mathfrak{t} = \bigg\{ \xi \in \g : \mbox{$\xi = {\rm diag}({\mathcal J}_1, {\mathcal J}_2, ..., {\mathcal J}_n)$} \bigg\}, \qquad 
N=2n, 
\end{equation}
and for odd $N$ by 
\begin{equation}
\mathfrak{t} = \bigg\{ \xi \in \g : \mbox{$\xi = {\rm diag}({\mathcal J}_1, {\mathcal J}_2, ..., {\mathcal J}_n, 0)$} \bigg\}, \qquad 
N=2n+1.
\end{equation}
For the geodesic $\gamma(t) = \exp(tX)$ there exists $g \in {\bf SO}(N)$ so that $X =g \xi g^{-1}$ for suitable 
$\xi \in \mathfrak{t}$ and $\gamma(t) = g \exp(t \xi) g^{-1}$. It is therefore plain that any periodic geodesic at identity 
is {\it conjugate} to one sitting entirely on the maximal torus $\T$. Thus in considering the periodic geodesics 
of $\G$ we can merely focus on those confined to $\T$. Now let $\Lambda \subset {\mathfrak t}$ denote the lattice 
\begin{equation} \label{lattice-L-def-equation}
\Lambda = \bigg\{ \xi \in \mathfrak{t} : \exp(2\pi \xi)= {\rm I}_N \bigg\}.
\end{equation}
Then any closed geodesic on $\T$ is the Lie exponential $\gamma(t) = \exp(2 \pi t \xi)$ for some $\xi \in \Lambda$ 
with $-\infty<t<\infty$. Note $\gamma(0)=\gamma(1)={\rm I}_N$. Now let $E_j$ (with $1 \le j \le n$) denote the block 
diagonal matrix 
\begin{equation}\label{block-Ei}
E_j = {\rm diag} (0, ..., 0, {\mathcal J}, 0, ..., 0), \qquad  
{\mathcal J} = {\mathcal J}(1) =\begin{bmatrix}
0 & -1 \\
1 & \quad 0 
\end{bmatrix}. 
\end{equation}
Then recalling the inner product (\ref{Cartan-Killing-metric-equation}) it is seen that $(E_j : 1 \le j \le n)$ forms an orthonormal basis for 
${\mathfrak t}$, hence, as any $\xi$ in this subalgebra can be expressed as $\xi= \sum_{j=1}^n a_j E_j$, upon exponentiating we have,  
\begin{align*}
\exp(2 \pi \xi) = \exp \left( \sum_{j=1}^n 2 \pi a_j E_j \right) = \prod_{j=1}^{n} \exp(2 \pi a_j E_j).
\end{align*}  
As a result 
\begin{align}
\xi \in \Lambda &\iff \exp(2 \pi \xi) = {\rm I}_N \iff \exp(2 \pi a_j E_j) = {\rm I}_N \nonumber \\
& \iff  
\left \{ \begin {array}{ll}
{\rm diag}[{\mathcal R} (2 \pi a_1), ..., {\mathcal R}(2\pi a_n)] = {\rm I}_N & \textrm{$N=2n$,} \\
{\rm diag}[{\mathcal R} (2 \pi a_1), ..., {\mathcal R}(2\pi a_n), 1] = {\rm I}_N & \textrm{$N=2n+1$,}   \nonumber 
\end{array} \right .  \\
&\iff a_j \in \Z \qquad \mbox{for all $1 \le j \le n$}.
\end{align}
This therefore identifies $\Lambda$ with $\Z^n$ which then via conjugation and translation describes all the periodic geodesics 
on $\G={\bf SO}(N)$.  Furthermore by conjugating $\Lambda$ in ${\mathfrak s}{\mathfrak o}(N)$ through ${\bf SO}(N)$ and using 
dimensional analysis it follows that every geodesic in ${\bf SO}(3)$ is periodic a conclusion that dramatically fails in ${\bf SO}(N)$ for 
$N \ge 4$. \footnote{In fact the Liouville measure of the set of periodic orbits of the geodesics flow in the co-tangent bundle $T^\star {\bf SO}(N)$ is 
zero for $N \ge 4$ ({\it see} \cite{Morris-Taheri-3} for more).} Now since the metric $\rho$ on ${\bf SO}(N)$ is bi-invariant we have
\begin{align}
\abs{\dot \gamma(t)} = \left| g_1 g \frac{d}{dt} \exp( 2 \pi t \xi) g^{-1} \right| = 
\sqrt{2 \pi^2 {\rm tr} (\xi^t \xi) } = 2\pi \abs{\xi}.  
\end{align}
Thus the length of the closed geodesic $\gamma=\gamma(t)$ is given by $l(\gamma) = 2\pi |\xi|$. 
Hence modulo translations and conjugations the number of closed geodesics whose length do not exceed 
$\sqrt{x}>0$ can be expressed as 
\begin{equation}
\mathscr{L}(x) = \sharp \bigg\{ \xi \in \Lambda : \abs{\xi} \leq \frac{\sqrt{x}}{2\pi} \bigg\} = \sum_{y\in \Z^n} \chi_r(y),
\end{equation}
where $r = \sqrt{x}/(2\pi)$ and $\chi_r$ is the characteristic function of the closed ball centred at origin with radius $r>0$. Thus the 
geodesic counting function (in the sense described) connects to the Gauss circle problem and its higher dimensional analogues -- a 
highly challenging and notoriously difficult problem in analytic number theory. Indeed $\mathscr{L}(x)$ counting 
the number of points in $\Z^n \cap \B_{\sqrt{x}/2\pi}$ has asymptotics given for suitable exponents $\delta, \zeta$ ({\it see} the table 
below) by
\begin{equation}
\mathscr{L}(x) \sim \frac{\omega_n x^\frac{n}{2}}{(2\pi)^n} + O \left( x^{\delta} ln^\zeta x \right), \quad x\nearrow \infty.
\end{equation}

\begin{table}[H]
\caption {The best known exponents for the remainder term in $\mathscr{L}(x)$}
\centering 
\begin{tabular}{|c |c | c| c| c|} 
\hline
& $n=2$ & $n=3$ & $n=4 $ & $n\geq 5 $ \\ [0.5ex] 
\hline 
$(\delta,\zeta)$  & $ (131/416,0) $ & $(21/32,0)$ & $(1,2/3)$ &$ ((n-2)/2,0)$\\
\hline
Ref.  & \cite{Huxley} &  \cite{HB} & \cite{Wal} & \cite{Fricker}\\
\hline
\end{tabular}
\label{table delta} 
\end{table}
For $n\geq 5$ the $\delta$ in Table \ref{table delta} is sharp whereas in the cases $n\leq 3$ finding the sharp $\delta$ is still an open problem. 
A conjecture of Hardy asserts that the sharp $\delta$ for $n=2$ has the form $\delta = 1/4 + \varepsilon$ for all $\varepsilon>0$ whilst in the case 
$n=3$ the sharp $\delta$ is conjectured to be $\delta=1/2+\varepsilon$ for all $\varepsilon>0$. ({\it See} the references for more.)

Next we denote by $\mathfrak{R}$ the set of roots, by $\Delta$ the corresponding root base and by $F$ the set of 
fundamental weights. Due to the difference in the root structure of ${\bf SO}(N)$ when $N=2n$ or $N=2n+1$ we describe these two different cases separately.

\begin{itemize}
\item ${\bf SO}(2n)$: 
\begin{align*}
{\mathfrak R} &= \bigg\{ i(\pm E_k \pm E_l) : k>l \mbox{ with $1 \le l \le n$ } \bigg\}, \\
\Delta & =\bigg\{ i(E_j-E_{j+1}) : \mbox{ with $1 \le j \le n-1$} \bigg\} \bigcup \bigg\{ i(E_{n-1}+E_n)  \bigg\}, \\
F &= \left\{ \mu_j = i \sum_{k=1}^j E_k: 1\leq j \leq n-2 \right\} \bigcup 
\left\{  \mu_{n-1},\mu_n =  \frac{i}{2} \left(\sum_{k=1}^{n-1} E_k \mp E_n \right) \right\}.
\end{align*}
The lattices of weights and analytic weights for $\mathbf{SO}(2n)$ are respectively given by,
\begin{align*}
\mathscr{P} = {\rm span}_{\Z} F= \bigg\lbrace \sum_{i=1}^n b_i E_i : b_i \in \frac{1}{2}\Z \bigg\rbrace, \quad 
\mathscr{A} = \bigg\lbrace \sum_{i=1}^n b_i E_i : b_i \in \Z \bigg\rbrace.
\end{align*}
In particular the set of analytic and dominant weights (the highest weights) is given by 
\begin{align}
\mathscr{A} \cap \mathscr{C}_+ = \bigg\lbrace \sum_{i=1}^n b_i E_i : b_i \in \Z \text{  and  }  b_1\geq b_2 \geq \dots\geq \abs{b_n} \bigg\rbrace. 
\end{align} 
Note that the $\mathscr{C}_+$ denotes the positive Weyl chamber corresponding to the choice of root base $\Delta$.

\item ${\bf SO}(2n+1)$: 
\begin{align*}
{\mathfrak R} &= \bigg\{ {i}(\pm E_l \pm E_k) : k>l \mbox{ with } 1 \le l \le n \bigg\} \bigcup \bigg\{ \pm i E_k : 1 \le k \le n \bigg\}, \\
\Delta &= \bigg\{ {i}(E_j-E_{j+1}) : 1 \le j \le n-1 \bigg\} \bigcup \bigg\{ i E_n \bigg\}, \\
F &= \left\{ \mu_j= i \sum_{k=1}^{j}E_k : 1 \le j \le n-1\right\} \bigcup \left\{ \mu_n=\frac{i}{2} \sum_{k=1}^n E_k \right\}.
\end{align*}
As in the case of $\mathbf{SO}(2n)$ we have that the weights and analytic weights are given by,
\begin{align*}
\mathscr{P} = {\rm span}_{\Z} F= \bigg\lbrace \sum_{i=1}^n b_i E_i : b_i \in \frac{1}{2}\Z \bigg\rbrace,\quad 
\mathscr{A} = \bigg\lbrace \sum_{i=1}^n b_i E_i : b_i \in \Z \bigg\rbrace.
\end{align*}
However in this case we have the set of analytic and dominant weights given by 
\footnote{Therefore in the $\mathbf{SO}(2n+1)$ case, in contrast to $\mathbf{SO}(2n)$, we have $b_n\in\N_0$.}
\begin{align}
\mathscr{A} \cap \mathscr{C}_+ = \bigg\lbrace \sum_{i=1}^n b_i E_i : b_i \in \Z \text{  and  }  b_1\geq b_2 \geq \dots\geq b_n\geq 0 \bigg\rbrace. 
\end{align}

\end{itemize}

It is well known that for a compact Lie group $\G$ equipped with a bi-invariant metric $g$ the Laplace-Beltrami operator $-\Delta_g$ 
has spectrum $\Sigma= (\lambda_\mu)$ with 
\begin{align}
\lambda_{\mu} &= (\mu+\rho,\mu + \rho) - (\rho,\rho) = || \mu+\rho ||^2 - ||\rho||^2 = (\mu, \mu) + 2 (\mu, \rho)
\end{align}
where $\rho$ is the half-sum of positive roots and $\mu \in\mathscr{A}\cap\mathscr{C}_+$ ({\it cf.}, e.g., Knapp \cite{Knapp}). 
Moreover the multiplicity of the eigenvalue $\lambda_\mu$ is $\dim(\pi_\mu)^2$ where $\pi_\mu \in \hat \G$ (the unitary dual of $\G$) 
is the irreducible representation associated to $\mu\in\mathscr{A}\cap\mathscr{C}_+$, while $\dim(\pi_\mu)$ is given by 
Weyl's dimension formula. 
%
%
Restricting to $\mathbf{SO}(N)$ the eigenvalues of $-\Delta_g$ denoted 
$\Sigma=(\lambda_{\omega} : \omega \in \mathscr{A} \cap \mathscr{C}_+ )$ are given by the explicit expression
\begin{equation}
\lambda_{\omega} = \begin{cases}
 \sum_{j=1}^{n}b_j(b_j+2n-2j),  &\text{   if   } N=2n,\\
 \sum_{j=1}^{n}b_j(b_j+2n+1-2j), & \text{  if  } N=2n+1, 
\end{cases} \label{item1}
\end{equation} 
where $b=(b_1,\dots,b_n)\in \mathbb{Z}^n$ and $b_1\geq b_2\geq \dots \geq b_n\geq 0$ when $N=2n+1$ and 
$b_1 \geq b_2 \geq \dots \geq \abs{b_n}\geq 0$ when $N=2n$. Now to describe the multiplicities using 
\begin{align}
\dim (\pi_{\omega}) = \frac{\prod_{\alpha\in {\mathfrak R}^+} (\alpha,\omega + \rho) }{\prod_{\alpha\in {\mathfrak R}^+} (\alpha, \rho) },
\end{align}
we first note that 
\begin{align}
\rho = \frac{1}{2}\sum_{\alpha\in {\mathfrak R}^+} \alpha = i\begin{cases} 
\sum_{j=1}^{n-1} (n-j)E_j, & \text{   if   } N=2n, \\
\sum_{j=1}^{n} \left(n-j+1/2 \right)E_j, & \text{   if   } N=2n+1.
\end{cases}
\end{align}
Therefore the multiplicity of the eigenvalue $\lambda_{\omega}$ is given by 
\begin{equation} \label{item2}
m_{n}(x) = \dim(\pi_{\omega})^2 = 
\frac{\prod_{i<l} (x_i^2-x_l^2)^2}{\prod_{j<l}(a_j^2-a_l^2)^2}, \qquad N=2n,
\end{equation}
\begin{equation}
m_{n}(x) = \frac{\prod_i x_i^2 \prod_{i<l} (x_i^2-x_l^2)^2}{\prod_j a^2_j \prod_{j<l}(a_j^2-a_l^2)^2}, \qquad N=2n+1.
\end{equation}
Here $a_j=n-j$ when $N=2n$ and $a_j=n-j+1/2$ when $N=2n+1$ with $x=(x_1,\dots,x_n)$ given by,
\begin{align}
x_j = b_j + \begin{cases}
n-j, & \text{   if   } N=2n, \\
n-j+1/2, & \text{   if   } N=2n+1. 
\end{cases}
\end{align}

\section{Counting lattice points with polynomial multiplicities} \label{SecThree}
\setcounter {equation}{0}

Let $\Gamma \subset \R^n$ be a lattice of full rank, that is, $\Gamma = \{ \sum_{j=1}^n \ell_j v_j : \ell_j \in \Z \}$ where $v_1, ..., v_n$ 
is a fixed set of linearly independent vectors in $\R^n$. Assume $F=F(\lambda)$ is a homogenous polynomial of degree $d \ge 1$ on $\R^n$ 
assigning to each lattice point $\lambda \in \Gamma$ an associated {\it multiplicity} or weight $F(\lambda)$. The aim 
here is to describe the asymptotics of the weighted lattice point counting function
\begin{equation} \label{counting-fn-MR-equation}
\mathscr{M}(R) = \sum_{\lambda\in \Gamma} F(\lambda) \chi_{R}(\lambda) =  \sum_{\lambda\in \Gamma} F_R(\lambda). 
\end{equation}  
Here $\chi_R$ denotes the characteristic function of the closed ball in $\R^n$ centred at the origin with radius $R>0$ and $F_R=F \chi_R$. 
The approach is an adaptation of the classical argument in \cite{Hlawka} based on smoothing out the sum via convolution with a mollifier and 
then using the Poisson summation formula. To this end we consider first the "mollified sum"
\begin{align} \label{mollified-sum-equation}
\mathscr{M}_{\varepsilon}(R) &= \sum_{\lambda \in \Gamma} [F_R \star \rho_\varepsilon] (\lambda) 
=  {\rm Vol}(\Gamma^\star) \sum_{\xi \in \Gamma^\star} \widehat{F_R}(\xi) \widehat{\rho_{\varepsilon}}(\xi) \nonumber \\
&=  {\rm Vol}(\Gamma^\star) \int_{\R^n} F_R(v) \, dv  + {\rm Vol}(\Gamma^*) \sum_{\Gamma^\star \backslash \lbrace 0 \rbrace} 
\widehat{F_R}(\xi) \widehat{\rho_{\varepsilon}}(\xi),
\end{align} 
where $\Gamma^\star$ denotes the lattice dual to $\Gamma$. The focus will now be on the asymptotics of the second term on the right. 
Indeed since
\begin{equation}
\left| \sum_{\Gamma^*\backslash \lbrace 0 \rbrace} \widehat{F_R}(\xi) \widehat{\rho_{\varepsilon}}(\xi) \right| 
\leq \sum_{\Gamma^*\backslash \lbrace 0 \rbrace} \abs{\widehat{F_R}(\xi)} \abs{\widehat{\rho_{\varepsilon}}(\xi)}
\end{equation}
and by basic properties of the Fourier transform 
\begin{equation}
\widehat{F_R}(\xi) = \int_{\R^n} e^{2\pi i \xi \cdot x} F_R(x) \, dx = R^{n+d} \widehat{F_1}(R\xi)
\end{equation} 
with $|\widehat{\rho_{\varepsilon}(\xi)}| = |\widehat{\rho}(\varepsilon \xi)| \leq c_k ( 1 + \varepsilon |\xi| )^{-k}$ we can write
\begin{equation}
\left| \sum_{\Gamma^*\backslash \lbrace 0 \rbrace} \widehat{F_R}(\xi) \widehat{\rho_{\varepsilon}}(\xi) \right| 
\leq c_k R^{n+d} \sum_{\Gamma^*\backslash \lbrace 0 \rbrace}  
\frac{\abs{\widehat{F_1}(R\xi)}}{\left( 1 + \varepsilon \abs{\xi} \right)^k}. 
\end{equation}
So we now need to describe the behaviour of $\widehat{F_1}(R\xi)$ for large $R$. Towards this end and in virtue of $F$ 
being homogeneous we proceed by expressing $F$ as
\begin{equation}
F(x)= |x|^d P \left( \frac{x}{|x|} \right) = |x|^d \sum_{k=1}^{[ \frac{d}{2}]} P_k (\theta), \qquad \theta = x/|x|, 
\end{equation} 
where $P=P(\theta) $ is the restriction to the sphere $\mathbb{S}^{n-1}$ of $F$ and $P_k=P_k(\theta)$ is a spherical harmonic of degree $d-2k$ 
({\it see}, e.g., L.~Grafakos \cite{Grafakos} or Stein $\&$ Weiss \cite{Stein-Weiss}). Next let us denote 
\begin{align}
F_1(x) 
=  \sum_{k=1}^{[ \frac{d}{2}]} F_{k,1}(x), \qquad F_{k,1}(x) = |x|^{2k} P_k(x)\chi_1 (x),
\end{align}  
where each $P_k$ is a solid spherical harmonic on $\R^n$ of degree $d-2k$. Clearly by the linearity of the Fourier transform we can write
\begin{equation*}
\widehat{F_1}(\xi) = \sum_{k=1}^{[ \frac{d}{2}]} \widehat{ F_{k,1}}(\xi), \qquad 
\widehat{ F_{k,1}}(\xi) = \widehat{f_{k,1}}(\abs{\xi}) P_k(\xi). 
\end{equation*}

We now momentarily focus on the quantity $\widehat{f_{k,1}}(\abs{\xi})$ 
\begin{align*}
\widehat{f_{k,1}}(\abs{\xi})  &= \frac{2\pi i^{2k-d}}{\abs{\xi}^{(n+2d-4k-2)/2}} \int_{0}^{1} s^{(n+2d)/2 } J_{(n+2d-4k-2)/2}(2\pi \abs{\xi} s) \, ds \\
&= \frac{(2\pi)^{-(n+2d)/2} i^{2k-d}}{\abs{\xi}^{(n+2d-2k)}}  \int_{0}^{2\pi \abs{\xi}} s^{(n+2d)/2}J_{(n+2d-4k-2)/2}(s) \, ds.
\end{align*}
By invoking an estimate for the weighted integral of Bessel functions that the reader can find in the Appendix ({\it see} Proposition \ref{Bessel-estimate-app}) 
we have 
\begin{align*}
\int_0^{2\pi \abs{\xi}}  s^{(n+2d)/2}J_{(n+2d-4k-2)/2}(s)ds & \leq C_k \left(2\pi |\xi| \right)^{(n+2d-1)/2} 
\end{align*}
when $\abs{\xi}>M_k$ for some $M_k\in \RR$. Therefore for $|\xi|>M_k$ we can write
\begin{align*}
\abs{\widehat{f_{k,1}}(|\xi|)} &\leq C_k |\xi|^{{-(\frac{n+1}{2} + d-2k)}}.
\end{align*}
This in turn means that,
\begin{align}
\abs{\widehat{F_1}(R\xi)} 
= \sum_{k=1}^{[\frac{d}{2}]} \abs{R\xi}^{d-2k} \abs{\widehat{f_{k,1}}(\abs{R\xi})} \left| P_k \left( \frac{\xi}{\abs{\xi}} \right) \right|  \leq c  \abs{R\xi}^{{-(n+1)/2}}
\end{align}
for $\abs{\xi}> \max_{k}(M_k)$. Therefore returning to the remainder term we get that for large enough $R$,
\begin{align}
\left| \sum_{\Gamma^*\backslash \lbrace 0 \rbrace} \widehat{F_R}(\xi) \widehat{\rho_{\varepsilon}}(\xi) \right| 
&\leq c \sum_{\Gamma^*\backslash \lbrace 0 \rbrace}  \frac{R^{d+(n-1)/2}}{\abs{\xi}^{{(n+1)/2}}\left( 1 + \varepsilon \abs{\xi} \right)^k} \nonumber \\
&\leq c \int_{\abs{\xi}\geq 1}  \frac{R^{d+(n-1)/2} \, d\xi}{|\xi|^{{(n+1)/2}}\left( 1 + \varepsilon |\xi| \right)^k} \nonumber \\
&\leq c \varepsilon^{-\frac{n-1}{2}} \int_{\RR^n}  \frac{R^{d+(n-1)/2} \, d\xi}{|\xi|^{{(n+1)/2}}\left(1+ |\xi| \right)^k} \nonumber \\
&\leq c  \frac{R^{d+(n-1)/2}}{\varepsilon^{\frac{n-1}{2}}}. 
\end{align}
As a result we can conclude that the "mollified sum" from (\ref{mollified-sum-equation}) has the asymptotic behaviour 
\begin{align}
\mathscr{M}_{\varepsilon}(R) &= {\rm Vol}(\Gamma^*) \int_{\R^n} F_R(x) \, dx  + O \left( R^{d+\frac{n-1}{2}}\varepsilon^{-\frac{n-1}{2}} \right) 
\nonumber \\
&= {\rm Vol}(\Gamma^*) R^{d+n} \int_{\R^n} F_1(x) \, dx + O \left( R^{d+\frac{n-1}{2}}\varepsilon^{-\frac{n-1}{2}} \right).
\end{align}

We next compare the mollified counting function with the original one. Towards this end we first observe that for 
$y\in \BB_{R}$ there exists some $z\in \BB_{R+\epsilon}$ such that $F_{R+\epsilon}\star \rho_{\epsilon}(y) = F_{R+\epsilon}(z)$ . Thus by combining the above 
\begin{align*}
\abs{F_R(y)-F_{R+\varepsilon}\star \rho_{\varepsilon}(y)} &= \abs{F_R(y)-F_{R+\varepsilon}(z)} \leq 2\varepsilon 
\max_{x\in \BB_{\varepsilon}(y)} \abs{\nabla F_R(x)}\leq C\varepsilon R^{d-1}, 
\end{align*}
and in addition we can obtain the similar bound for each $y\in \B_R(0)$,
\begin{align*}
\abs{F_R(y)-F_{R-\varepsilon}\star \rho_{\varepsilon}(y)}
\leq C \varepsilon R^{d-1}.
\end{align*}
Therefore it follows that for each $y\in\B_R(0)$ we have,
\begin{align*}
F_{R-\varepsilon}\star \rho_{\varepsilon}(y) - C\varepsilon R^{d-1} \leq F_R(y) 
\leq F_{R+\varepsilon} \star \rho_{\varepsilon}(y) + C \epsilon R^{d-1}
\end{align*}
thus giving
\begin{equation} \label{prev-obt-equation}
\mathscr{M}_{\varepsilon}(R-\varepsilon) - C\varepsilon R^{d-1} \sum_{\lambda \in \Gamma} \chi_{R}(\lambda) \leq \mathscr{M}(R) 
\leq \mathscr{M}_{\varepsilon}(R+\varepsilon) + C \epsilon R^{d-1}\sum_{ \lambda \in \Gamma} \chi_{R}(\lambda).
\end{equation}

Next referring to the original lattice $\Gamma$ we can define with the aid of the basis vectors $v_1, ..., v_n$ another lattice 
\begin{equation}
\Omega 
= \bigg\{ \omega = \sum_{j=1}^n \ell_j \frac{a v_j}{||v_j||} : \ell_j \in \Z \bigg\}
\end{equation} 
where $a = \min_j \lbrace \norm{v_j} \rbrace>0$. Then using the bound
\begin{equation}
\sum_{\Gamma} \chi_R \leq \sum_{\Omega} \chi_R = \sum_{\Z^n} \chi_{\frac{R}{a}} \leq \frac{R^n}{a^n},
\end{equation}
we can rewrite (\ref{prev-obt-equation}) as
\begin{align*}
\mathscr{M}_{\varepsilon}(R-\varepsilon) - C\varepsilon R^{d+n-1} \leq \mathscr{M}(R) \leq \mathscr{M}_{\varepsilon}(R+\varepsilon) 
+ C \epsilon R^{d+n-1}.
\end{align*}
Therefore the previously obtained bounds for $\mathscr{M}_\varepsilon$ result in
\begin{align*}
\mathscr{M}(R) = \left( {\rm Vol}(\Gamma^*) \int_{\R^n} F_1(x) \, dx  \right) R^{d+n} + O\left( R^{d+\frac{n-1}{2}}\varepsilon^{-\frac{n-1}{2}} 
+ \varepsilon R^{d+n-1}\right) 
\end{align*}
as $R\nearrow \infty$. Noting that the remainder term is optimised when $\varepsilon=R^{-\frac{n-1}{n+1}}$ leads to the following 
conclusion.

\begin{theorem}\label{radial} Let $F$ be a homogeneous polynomial of degree $d \ge 1$ on $\R^n$ and let 
$\Gamma \subset \R^n$ be a lattice of full rank. Consider the weighted counting function $\mathscr{M}=\mathscr{M}(R)$ 
defined for $R>0$ by \eqref{counting-fn-MR-equation}.
Then
\begin{equation} \label{asympt}
\mathscr{M}(R) = \left( {\rm Vol}(\Gamma^*)  \int_{\B^n_1} F(x) \, dx  \right) R^{d+n} + O\left( R^{d+n -\frac{2}{n+1}}\right), \qquad 
R \nearrow \infty.
\end{equation}
\end{theorem}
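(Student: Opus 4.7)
The plan is to follow the classical smoothing-then-Poisson-summation argument of Hlawka, adapted to carry the polynomial weight $F$. The argument is laid out in the discussion preceding the statement, and I would organise it into three steps as follows.

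First, I would introduce a radial mollifier $\rho_\varepsilon$ and replace the sharp-cutoff sum $\mathscr{M}(R)$ by its smoothed counterpart $\mathscr{M}_\varepsilon(R)=\sum_{\lambda\in\Gamma}(F_R\star\rho_\varepsilon)(\lambda)$, where $F_R=F\chi_R$. Applying the Poisson summation formula on $\Gamma$ produces
\begin{equation*}
\mathscr{M}_\varepsilon(R)={\rm Vol}(\Gamma^\star)\int_{\R^n}F_R(v)\,dv+{\rm Vol}(\Gamma^\star)\sum_{\xi\in\Gamma^\star\setminus\{0\}}\widehat{F_R}(\xi)\widehat{\rho_\varepsilon}(\xi),
\end{equation*}
and the first term evaluates to $ {\rm Vol}(\Gamma^\star) R^{d+n}\int_{\B^n_1}F$ by homogeneity of $F$, which is the desired main term.

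Second, I would bound the dual-lattice tail. The scaling identity $\widehat{F_R}(\xi)=R^{n+d}\widehat{F_1}(R\xi)$ reduces the problem to showing $|\widehat{F_1}(\eta)|\le C|\eta|^{-(n+1)/2}$ for $|\eta|$ large. To secure this, I would decompose $F|_{\Sp^{n-1}}$ into spherical harmonics $P_k$ of degree $d-2k$, writing $F(x)=|x|^d\sum_k P_k(x/|x|)$, and then use the Bochner-type identity to express each $\widehat{F_{k,1}}(\xi)$ as a weighted Bessel integral in $|\xi|$ times the solid harmonic $P_k(\xi)$. The weighted Bessel-integral estimate (Proposition \ref{Bessel-estimate-app}) then yields the required $(n+1)/2$ decay termwise. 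Combined with the mollifier bound $|\widehat{\rho_\varepsilon}(\xi)|\le c_k(1+\varepsilon|\xi|)^{-k}$ and a standard integral comparison over $\Gamma^\star\setminus\{0\}$, the tail is controlled by $CR^{d+(n-1)/2}\varepsilon^{-(n-1)/2}$.

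Third, I would transfer the bound back to $\mathscr{M}(R)$ through a sandwich argument. The pointwise comparison $|F_R(y)-F_{R\pm\varepsilon}\star\rho_\varepsilon(y)|\le C\varepsilon R^{d-1}$ for $y\in\B^n_R$ (which follows from $|\nabla F_R|\le CR^{d-1}$ and the $\varepsilon$-support of $\rho_\varepsilon$), together with the crude count $\sum_{\lambda\in\Gamma}\chi_R(\lambda)\le CR^n$ obtained by dominating $\Gamma$ by a suitably rescaled integer lattice $\Omega$, produces
\begin{equation*}
\mathscr{M}_\varepsilon(R-\varepsilon)-C\varepsilon R^{d+n-1}\le\mathscr{M}(R)\le\mathscr{M}_\varepsilon(R+\varepsilon)+C\varepsilon R^{d+n-1}.
\end{equation*}
Inserting the estimate from the second step yields two competing error terms, $R^{d+(n-1)/2}\varepsilon^{-(n-1)/2}$ and $\varepsilon R^{d+n-1}$, which balance exactly at $\varepsilon=R^{-(n-1)/(n+1)}$, producing the claimed remainder $O(R^{d+n-2/(n+1)})$. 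The main obstacle is the sharp Fourier decay $|\widehat{F_1}(\eta)|=O(|\eta|^{-(n+1)/2})$, which encodes the oscillatory cancellation in the radial Bessel integrals and is ultimately responsible for the explicit exponent $2/(n+1)$ in the remainder; once this decay is in hand, the rest of the argument is a bookkeeping exercise balancing the two error contributions.
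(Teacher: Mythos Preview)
Your proposal is correct and follows essentially the same route as the paper: mollify, apply Poisson summation, bound the dual-lattice tail via the spherical-harmonic decomposition of $F$ together with the weighted Bessel estimate of Proposition~\ref{Bessel-estimate-app}, then sandwich $\mathscr{M}(R)$ between $\mathscr{M}_\varepsilon(R\pm\varepsilon)$ using the crude lattice count and optimise at $\varepsilon=R^{-(n-1)/(n+1)}$. The organisation and every key ingredient match the paper's argument.
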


Note that repeating the above proof for the shifted counting function $\mathscr{M}^h(R)=\sum_{\lambda \in \Gamma} F_R(\lambda+h)$ 
with $\mathscr{M}^h_\varepsilon(R) = \sum_{\lambda \in \Gamma} [F_R\ast \rho_\varepsilon](\lambda+h)$ results in the exact same 
asymptotics \eqref{asympt} for $\mathscr{M}^h$.
\footnote{This is a consequence of the identities $\widehat {f(\cdot +h)}(\xi) = \widehat{f}(\xi) e^{2\pi i h \cdot \xi}$, $\abs{\widehat {f(\cdot +h)}(\xi)} 
= \abs{\widehat{f}(\xi)}$.} We use this remark later on.

\section{Improved asymptotics for $\mathscr{N}(\lambda; \G)$ when ${\mathbb G}={\bf SO}(N)$, $\mathbf{SU}(N)$, $\mathbf{U}(N)$ and ${\bf Spin}(N)$} 
\label{SecFour}
\setcounter{equation}{0}

This section is devoted to the analysis of the asymptotics of the spectral counting function $\mathscr{N}(\lambda; \G)$ as $\lambda \nearrow \infty$ 
when $\G$ is one of the special orthogonal or unitary groups in the title. Here, the calculations in light of what has been obtained so far is explicit and 
the main question is the behaviour of the remainder term and whether it agrees with the Avakumovic-H\"ormander sharp form or if there is an improvement. 
Notice that ${\bf SO}(2) \cong {\mathbb S}^1$ and ${\bf SO}(3) \cong {\mathbb S}^3 / \{\pm 1\} \cong {\bf P}({\mathbb R}^3)$, the real projective space, and 
so in view of the periodicity of the geodesic flow (or direct calculations) we do not expect any improvements. However remarkably things change 
sharply as soon as we pass to the higher dimensional cases ${\bf SO}(N)$ (with $N \ge 4$). 
Indeed from earlier discussions we know, using \eqref{item1}-\eqref{item2}, that the spectral counting function for ${\bf SO}(N)$ is given by
\footnote{Below we shall be using the notation of $\mathscr{A}_\rho = \mathscr{A} + \rho$. }
\begin{align}\label{counting1}
\mathscr{N}(\lambda) = \sum_{\omega\in \mathscr{A} \cap {\mathscr{C}}_+} m_n(\omega+\rho) \chi_R(\omega+\rho) 
= \sum_{x\in \mathscr{A}_\rho \cap \mathring{\mathscr{C}}_+} m_n(x) \chi_R(x),
\end{align} 
where $\chi_R$ is the characteristic function of the closed ball with $R=\sqrt{\lambda + \norm{\rho}^2}$ centred at the origin and $m_n$ 
is the multiplicity function that is explicitly by \eqref{item2}. In \eqref{counting1} we have also let $x=\omega+\rho$ and used the fact that 
$x \in \mathring{\mathscr{C}}_+$ for $\omega \in \mathscr{A} \cap \mathscr{C}_+$. An easy inspection show that on 
$\partial \mathscr{C}_+$ we have $m_n(x)=0$ therefore we can rewrite $\mathscr{N}(\lambda)$ as 
\begin{equation}
\mathscr{N}(\lambda) = \sum_{x\in \mathscr{A}_\rho \cap \mathscr{C}_+} m_n(x) \chi_{R}(x).
\end{equation}
Notice that the multiplicity function $m_n$ is invariant under any permutation of $(x_1,\cdots,x_n)$ in either case. In addition $m_n$ is also invariant under 
any change in sign of $n-1$ of the $x_i$'s when $N=2n$ and invariant under any change of sign of all the $x_i$'s when $N=2n+1$. Thus $m_n$ is invariant 
under the Weyl group $W$ given by 
\begin{align}
W = \begin{cases}
\Z_2^{n-1} \rtimes S_n,  &\text{   if   } N=2n, \\
\Z^n_2 \rtimes S_n,  &\text{    if    } N=2n+1.   
\end{cases}
\end{align}  
We now take advantage of the action of the Weyl group on the set of weights $\mathscr{A}_\rho \cap \mathscr{C}_+$ to extend $\mathscr{N}(\lambda)$ 
to the full set of weights $\mathscr{A}_\rho$. \footnote{It can be easily checked that the Weyl group $W$ maps $\mathscr{A}_{\rho}$ to itself since for 
each $w \in W$ we have that $w\cdot \rho = \rho - \alpha$ for some $\alpha \in \mathfrak{R}\subset \mathscr{A}$. Then as $W \cdot \mathscr{A} 
= \mathscr{A}$ we clearly have that for any $w\in W$, $w\cdot(\mu + \rho) \in \mathscr{A}_{\rho}$. } Indeed as the Weyl group acts simply transitively on 
the interior of the Weyl chambers (which means that the interior of any Weyl chamber is mapped onto the interior of any other chamber in a bijective 
manner) we can write 
\begin{align*} 
\mathscr{N}(\lambda) &= \frac{1}{\abs{W}}\sum_{x \in \mathscr{A}_\rho} m_n(x)\chi_{R}(x) = \frac{1}{\abs{W}} 
\sum_{\omega\in \mathscr{A}}m_n(\omega +\rho)\chi_{R}(\omega+\rho) = \mathscr{M}^\rho(R). 
\end{align*} 
Now $m_n$ is a homogeneous polynomial of degree $2l=d-n$ where $d=\dim[{\mathbf{SO}(N)}]$ and $n={\rm Rank}[\mathbf{SO}(N)]$. 
Then since $\mathscr{A}$ can be identified with $\Z^n$ we have that ${\rm Vol}(\mathscr{A}^*)=1$ and then from Theorem \ref{radial} we 
deduce that
\begin{align*}
\mathscr{M}^\rho(R) = \frac{R^d}{\abs{W}} \int_{\RR^n} m_n(x) \chi_1(x) dx  + O\left( R^{d - 1-\frac{n-1}{n+1}}\right), 
\end{align*} 
as $R \nearrow \infty$. Hence in view of ${\mathscr{N}(\lambda)}=\mathscr{M}(R)$, when $R=\sqrt{\lambda + \norm{\rho}^2}$, we obtain
\begin{align*}
\mathscr{N}(\lambda) =  \frac{\lambda^{d/2}}{\abs{W}} \int_{\RR^n} m_n(x) \chi_1(x) dx  + O\left( \lambda^{{\frac{d - 1}{2}-\frac{n-1}{2(n+1)}}}\right). 
\end{align*} 
The leading term can be evaluated to be 
\begin{align*}
\frac{\lambda^{d/2}}{\abs{W}}\int_{\BB_1}m_n(x) dx  = \frac{\omega_d {\rm Vol}_g({\bf SO}(N))}{(2\pi)^d}\lambda^{d/2}.
\end{align*}
Subsequently it follows that  
\begin{equation}
\mathscr{N}(\lambda) = \frac{\omega_d {\rm Vol}_g({\bf SO}(N))}{(2\pi)^d}\lambda^{\frac{d}{2}}  + O(\lambda^{\alpha}), \quad \lambda \nearrow \infty,
\end{equation}
where the exponent $\alpha$ in the remainder term, by making use of $d=(N^2-N)/2$, is seen to be 
\begin{align}
\alpha = \frac{1}{2(n+1)}\begin{cases}
{{2n^3+n^2-3n} } & \text{    if    } N=2n, \\
{{2n^3+3n^2-n}} & \text{    if    } N=2n+1.
\end{cases} \label{alpha}
\end{align}
This in particular confirms that the remainder term in Weyl's law is not sharp for the compact Lie group ${\bf SO}(N)$. 
In summary we have proved the following result.

\begin{theorem} The spectral counting function $\mathscr{N}=\mathscr{N}(\lambda; \mathbf{SO}(N)$ of the Laplace-Beltrami operator with 
$n = {\rm rank}[\mathbf{SO}(N)] \ge 2$ has the asymptotics $($$\lambda \nearrow \infty$$)$ 
\begin{equation}
\mathscr{N}[\lambda; {\bf SO}(N)] = \frac{{\rm Vol}({\bf SO}(N))\omega_{d}}{(2\pi)^{d}} \lambda^{\frac{d}{2}}  
+ O(\lambda^{\alpha}),
\end{equation}
with $d=\dim(\mathbf{SO}(N))$ and $\alpha$ given by \eqref{alpha}. 
\end{theorem}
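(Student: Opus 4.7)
The plan is to reorganize the sum defining $\mathscr{N}(\lambda)$ so that Theorem \ref{radial} applies directly to it. Starting from the explicit Casimir spectrum \eqref{item1} and the multiplicity formula \eqref{item2}, I first substitute $x=\omega+\rho$, so that the eigenvalue $\lambda_\omega$ becomes $\|x\|^2-\|\rho\|^2$ and the counting condition $\lambda_\omega\le\lambda$ becomes $|x|\le R$ with $R=\sqrt{\lambda+\|\rho\|^2}$. This rewrites
\begin{equation*}
\mathscr{N}(\lambda) = \sum_{x\in\mathscr{A}_\rho\cap\mathscr{C}_+} m_n(x)\,\chi_R(x),
\end{equation*}
where the key structural input is that $m_n$ is a homogeneous polynomial of degree $2\ell=d-n$ which is invariant under the full Weyl group $W$ (namely $\Z_2^{n-1}\rtimes S_n$ for $N=2n$ and $\Z_2^n\rtimes S_n$ for $N=2n+1$) and which vanishes on $\partial\mathscr{C}_+$.

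Second, since $W$ acts simply transitively on the interiors of the Weyl chambers and preserves the shifted lattice $\mathscr{A}_\rho$, the restriction to $\mathscr{C}_+$ can be undone by averaging:
\begin{equation*}
\mathscr{N}(\lambda) = \frac{1}{|W|}\sum_{x\in\mathscr{A}_\rho} m_n(x)\,\chi_R(x) = \frac{1}{|W|}\,\mathscr{M}^\rho(R),
\end{equation*}
where $\mathscr{M}^\rho$ is precisely the shifted weighted counting function of Section \ref{SecThree} associated with the lattice $\mathscr{A}\cong\Z^n$ (of unit covolume) and the polynomial weight $m_n$.

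Third, I would invoke Theorem \ref{radial} together with the shift-invariance remark following it to obtain
\begin{equation*}
\mathscr{M}^\rho(R) = R^d\int_{\B^n_1} m_n(x)\,dx + O\bigl(R^{d-2/(n+1)}\bigr),
\end{equation*}
noting $d=n+2\ell$. Substituting $R^2=\lambda+\|\rho\|^2$ and expanding binomially turns the leading factor into $\lambda^{d/2}+O(\lambda^{d/2-1})$, while the error term becomes $O(\lambda^{(d-1)/2 - (n-1)/(2(n+1))})$, which absorbs the $O(\lambda^{d/2-1})$ correction as soon as $n\ge 2$. The leading constant $|W|^{-1}\int_{\B^n_1}m_n\,dx$ is then matched with the Weyl prefactor $\omega_d\,{\rm Vol}_g(\mathbf{SO}(N))/(2\pi)^d$ through the standard Weyl integration formula, with careful bookkeeping of the normalization of the bi-invariant inner product \eqref{Cartan-Killing-metric-equation} and the root/weight bases.

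Finally, substituting $d=N(N-1)/2$ with $N=2n$ or $N=2n+1$ into $\alpha=(d-1)/2-(n-1)/(2(n+1))$ and simplifying yields the closed form \eqref{alpha}. The main obstacle I anticipate is the identification of the leading constant with the Weyl prefactor: it requires reconciling the lattice-side integral $\int_{\B^n_1}m_n\,dx$ with the Lie-theoretic volume of $\mathbf{SO}(N)$, which is conceptually routine but demands careful tracking of the various normalizations in play. Everything else in the argument is essentially an exercise in applying Theorem \ref{radial} to a Weyl-symmetrized reformulation of the counting function.
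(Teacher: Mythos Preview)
Your proposal is correct and follows essentially the same route as the paper: rewrite the counting function via the shift $x=\omega+\rho$, use the Weyl-group invariance of $m_n$ and its vanishing on chamber walls to pass from $\mathscr{A}_\rho\cap\mathscr{C}_+$ to the full shifted lattice $\mathscr{A}_\rho$, and then apply Theorem~\ref{radial} (with the shift remark) before substituting $R=\sqrt{\lambda+\|\rho\|^2}$ and identifying the leading coefficient. The paper likewise states the identification of the leading constant without spelling out the normalization bookkeeping, so your flagged ``main obstacle'' is handled at the same level of detail there.
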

\begin{table}[H]
\caption {$\mathbf{SO}(N)$ and ${\bf Spin}(N)$}
\centering 
\begin{tabular}{|c |c |c|} 
\hline
$\G$ & $\mathbf{SO}(N)$ & ${\bf Spin}(N)$\\ [0.5ex] 
\hline 
$n=\mbox{rank}(\G)$  & 
$[N/2]$ & $[N/2]$\\
\hline
$d=\dim(\G)$ & 
$N(N-1)/2$ & $N(N-1)/2$  \\ 
\hline
$Q=\prod_{\alpha\in R^+} (\alpha,\rho) $  &\multicolumn{2}{|c|}{ $  (2^{-n}N!!)^{N-2n}\prod_{j=1}^{n-1} j! \prod_{j<k} (N-j-k) $} \\
\hline
${\rm Vol}(\G) \times Q$  & $ (2\pi)^{{N(N-1)}/4+n/2} $ & $2^n(2\pi)^{{N(N-1)}/4+n/2} $ \\
\hline
\end{tabular}
\label{table vol 1} 
\end{table}

We now present the analogous analysis and result for the unitary and special unitary groups ${\bf U}(N)$ and ${\bf SU}(N)$ respectively. 
Firstly note that the spectrum of the Laplace-Beltrami on ${\bf U}(N)$ and ${\bf SU}(N)$ is given by the following
\begin{align}
\lambda_{\omega} & = \sum_{j=1}^N \left[(b_j-j+(N+1)/2)^2-((N+1)/2-j)^2\right],
\end{align}
where $b_1\geq b_2\geq \dots \geq b_N $ with $b_j\in\Z$ for $\mathbf{U}(N)$ or $b_j\in\Z+b_N$ for $1\leq j \leq n-1$ and $b_N \in \Z/N$ whilst \eqref{b_N} holds for $\mathbf{SU}(N)$. 
The multiplicity of these eigenvalues is given by,
\begin{align}
m_N(x) =  \prod_{1\leq j<k\leq N} \frac{\left( x_j-x_k \right)^2 }{\left(k-j\right)^2},
\end{align}
where $x=(x_1,\dots,x_N)$ and $x_j = b_j-j+(N+1)/2 $. Note that in the case of $\mathbf{SU}(N)$ the 
\begin{equation}
b_N= -\sum_{j=1}^{N-1}b_j, \label{b_N}
\end{equation}	
which therefore means that the eigenvalues and corresponding multiplicity function only depend on $b_1,\dots,b_{N-1}$ 
and $x_1,\dots,x_{N-1}$ respectively. Now following the arguments of $\mathbf{SO}(N)$ we can prove the following (the 
proof of which we shall omit due to the similarity with $\mathbf{SO}(N)$).

\begin{theorem} The spectral counting function $\mathscr{N}(\lambda)=\mathscr{N}(\lambda; \G)$ of the Laplace-Beltrami operator on the 
unitary group $\G={\bf U}(N)$ with $N \ge 2$ and $d=\dim(\mathbf{U}(N))=N^2$ has the asymptotics $($$\lambda \nearrow \infty$$)$
\begin{equation}
\mathscr{N}[\lambda; {\bf U}(N)] = \frac{{\rm Vol}({\bf U}(N))\omega_{d}}{(2\pi)^{d}} \lambda^{\frac{d}{2}} 
+ O \left( \lambda^{\frac{N(N+2)(N-1)}{2(N+1)}} \right).
\end{equation}
Likewise in the case of the special unitary group $\G={\bf SU}(N)$ with $N \ge 2$ and $d=\dim(\mathbf{SU}(N))=N^2-1$ we have that
\begin{equation}
\mathscr{N}[\lambda; {\bf SU}(N)] = \frac{{\rm Vol}({\bf SU}(N))\omega_{d}}{(2\pi)^{d}} \lambda^{\frac{d}{2}} 
+ O \left( \lambda^{\frac{N^3-3N+2}{2N}} \right).
\end{equation} 
\end{theorem}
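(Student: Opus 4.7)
The approach is a direct adaptation of the $\mathbf{SO}(N)$ argument developed earlier in this section. First, using the eigenvalue formula and the explicit multiplicity $m_N(x) = \prod_{j<k}(x_j-x_k)^2/(k-j)^2$, I would express the spectral counting function as a weighted lattice sum over dominant analytic weights and then exploit Weyl-group invariance. The Weyl group in both cases is $W = S_N$ acting by coordinate permutations on $(x_1,\ldots,x_N)$, so $m_N$ is manifestly $W$-invariant. Since $W$ acts simply transitively on the interior Weyl chambers and $m_N$ vanishes on every wall $\{x_j = x_k\}$, the sum extends from the dominant chamber to the full analytic weight lattice, giving
\[
\mathscr{N}(\lambda) = \frac{1}{\abs{W}} \sum_{\omega \in \mathscr{A}} m_N(\omega + \rho)\, \chi_R(\omega + \rho) = \mathscr{M}^\rho(R),
\]
with $R = \sqrt{\lambda + \norm{\rho}^2}$.

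For $\mathbf{U}(N)$ the weight lattice $\mathscr{A}$ is immediately identified with $\Z^N$, a full-rank lattice in $\R^N$, and $m_N$ is a homogeneous polynomial of degree $d - n = N(N-1)$ in $n = N$ variables. For $\mathbf{SU}(N)$ one must additionally eliminate $b_N$ via the constraint $b_N = -\sum_{j=1}^{N-1} b_j$ of \eqref{b_N}; a direct computation gives $x_N = -\sum_{j=1}^{N-1} x_j$, so $\mathscr{A}$ descends to a full-rank lattice in $\R^{N-1}$ while $m_N$, viewed as a polynomial on this lattice, remains homogeneous of total degree $N(N-1)$ (substituting a linear form into a homogeneous polynomial preserves both homogeneity and total degree). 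Checking this reduction cleanly, and verifying that the resulting polynomial is still non-degenerate in the sense required by Theorem \ref{radial}, is the main point where the $\mathbf{SU}(N)$ argument genuinely departs from the $\mathbf{SO}(N)$ template; everything else is computational.

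Next I would apply Theorem \ref{radial}, together with the translation-invariance remark following it, to the weighted sum $\mathscr{M}^\rho(R)$ with polynomial degree $d-n$ in lattice dimension $n$. This yields
\[
\mathscr{M}^\rho(R) = \frac{R^d}{\abs{W}} \int_{\B^n_1} m_N(x)\, dx + O\bigl(R^{d - 1 - (n-1)/(n+1)}\bigr),
\]
with leading coefficient evaluating to $\mathrm{Vol}_g(\G)\,\omega_d /(2\pi)^d$ via the standard Weyl integration/volume identification used for $\mathbf{SO}(N)$ and $\mathbf{Spin}(N)$. Substituting $R = \sqrt{\lambda + \norm{\rho}^2}$ produces a remainder $O(\lambda^\beta)$ with $\beta = \tfrac{1}{2}(d - 1 - (n-1)/(n+1))$, and the two cases are then read off directly: for $\mathbf{U}(N)$, $(d,n) = (N^2, N)$ gives
\[
\beta \;=\; \tfrac{1}{2}\Bigl(N^2 - 1 - \tfrac{N-1}{N+1}\Bigr) \;=\; \frac{N(N-1)(N+2)}{2(N+1)},
\]
and for $\mathbf{SU}(N)$, $(d,n) = (N^2-1, N-1)$ gives
\[
\beta \;=\; \tfrac{1}{2}\Bigl(N^2 - 2 - \tfrac{N-2}{N}\Bigr) \;=\; \frac{N^3 - 3N + 2}{2N},
\]
matching the stated exponents.
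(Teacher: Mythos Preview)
Your proposal is correct and follows precisely the route the paper indicates (the paper omits the proof, merely stating that one follows the $\mathbf{SO}(N)$ argument verbatim). Your exponent computations for both groups are accurate, and the reduction for $\mathbf{SU}(N)$ via $x_N = -\sum_{j<N} x_j$ is the right way to pass to a full-rank lattice of rank $N-1$. One small remark: Theorem~\ref{radial} requires only that $F$ be a homogeneous polynomial of degree $\ge 1$, so no separate ``non-degeneracy'' check is needed.
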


Note that the metric is the one arising from the inner product $(X,Y)={\rm tr}\,(X^\star Y)$ and is bi-invariant. By inspection 
for ${\bf U}(N)$ when $N \ge 2$ and for ${\bf SU}(N)$ when $N \ge 3$ the remainder term in Weyl's law 
\eqref{Weyl-law-remainder-general-equation} is not sharp whilst evidently outside this range the 
geodesic flow on the group is periodic.

\begin{table}[H]
\caption {${\bf SU}(N)$ and ${\bf U}(N)$}
\centering 
\begin{tabular}{|c |c | c|} 
\hline
$\G$ & ${\bf SU}(N)$ & ${\bf U}(N)$ \\ [0.5ex] 
\hline 
$n=\mbox{rank}(\G)$  & $N-1$ & $N$\\
\hline
$d=\dim(\G)$ &  $N^2-1 $ & $N^2$ \\ 
\hline
$Q=\prod_{\alpha\in R^+} (\alpha,\rho) $  & $\prod_{j=1}^{N-1}j!$ 
& $\prod_{j=1}^{N-1}j!$ \\
\hline
${\rm Vol}(\G) \times Q$  & ${N(2\pi)^{(N+2)(N-1)/2}}$ & $  {(2\pi)^{N(N+1)/2}}$\\
\hline
\end{tabular}
\label{table vol 2} 
\end{table}

Let us end the section by studying the asymptotics of $\mathscr{N}(\lambda; \G)$ for when ${\mathbb G}={\bf Spin}(N)$ is the universal 
cover of ${\bf SO}(N)$. In virtue of $\pi_1[{\bf Spin}(N)] \cong 0$ there is a one-to-one correspondence between the [complex] irreducible 
representations of ${\bf Spin}(N)$ and those of its Lie algebra ${\bf {\mathfrak s}{\mathfrak p}{\mathfrak i}{\mathfrak n}}(N)$. Moreover as 
${\bf Spin}(N)$ is a double cover of ${\bf SO}(N)$ the irreducible representations of the latter are only "half" the total of the former and 
hence of the Lie algebras $\mathfrak{spin}(N) \cong {\bf {\mathfrak so}}(N)$. Despite this we have the following conclusion for 
${\bf Spin}(N)$ based on what was obtained previously for ${\bf SO}(N)$. 

\begin{theorem}\label{spin}
Consider the universal covering group $\G={\bf Spin}(N)$ of ${\bf SO}(N)$ with $N \ge 3$. Then the spectral counting function 
$\mathscr{N}=\mathscr{N}(\lambda; \G)$ of the Laplace-Beltrami operator has the asymptotics 
\begin{equation}
\mathscr{N}(\lambda; \G) = \frac{\omega_d {\rm Vol}_g(\G)}{(2\pi)^d} \lambda^{\frac{d}{2}} 
+ O \left( \lambda^{\alpha} \right), \qquad \lambda \nearrow \infty, 
\end{equation}
where $d=\dim {\bf Spin}(N)=\dim {\bf SO}(N)=N(N-1)/2$ and $\alpha$ is given by \eqref{alpha}.
\end{theorem}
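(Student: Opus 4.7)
The plan is to mirror the argument already carried out for $\mathbf{SO}(N)$, adapting only those ingredients that genuinely differ. Since $\mathbf{Spin}(N)$ is simply connected with $\mathfrak{spin}(N) \cong \mathfrak{so}(N)$, its irreducible representations are parametrised by $\mathscr{P} \cap \mathscr{C}_+$ rather than $\mathscr{A} \cap \mathscr{C}_+$; the eigenvalues $\lambda_\omega$ and multiplicities $\dim(\pi_\omega)^2 = m_n(\omega+\rho)^2$ are given by exactly the same explicit formulas \eqref{item1}--\eqref{item2} (because they depend only on the Lie-algebraic data: the roots, the Weyl vector $\rho$, and Weyl's dimension formula). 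The upshot is that $m_n$ is the \emph{same} homogeneous polynomial of degree $2l = d-n$ on $\R^n$, and the Weyl group $W$ is the same, acting on $\mathscr{P}$ in the same way.

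First I would write, using the Weyl-invariance of $m_n$ and the simply transitive action on Weyl chambers,
\begin{equation*}
\mathscr{N}(\lambda; \mathbf{Spin}(N)) = \sum_{\omega \in \mathscr{P} \cap \mathscr{C}_+} m_n(\omega+\rho)\chi_R(\omega+\rho) = \frac{1}{\abs{W}} \sum_{\omega \in \mathscr{P}} m_n(\omega+\rho)\chi_R(\omega+\rho),
\end{equation*}
with $R = \sqrt{\lambda + \norm{\rho}^2}$. This is precisely the shifted weighted lattice-point sum $\mathscr{M}^\rho(R)$ associated to the full-rank lattice $\Gamma = \mathscr{P} \subset \R^n$ and the homogeneous polynomial $m_n$.

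Next I would invoke Theorem \ref{radial} (in its shifted form recorded after its statement) with $\Gamma = \mathscr{P}$. Because this theorem depends only on the fact that $\Gamma$ is a full-rank lattice, on the degree of the polynomial, and on the dimension $n$, it yields
\begin{equation*}
\mathscr{M}^\rho(R) = \left( {\rm Vol}(\mathscr{P}^*) \int_{\B^n_1} m_n(x)\,dx \right) \frac{R^d}{\abs{W}} + O\bigl( R^{d-1 - (n-1)/(n+1)} \bigr),
\end{equation*}
so on translating back via $R^2 = \lambda + \norm{\rho}^2$ and expanding one obtains a remainder of order $\lambda^{\alpha}$ with exactly the same $\alpha$ as in \eqref{alpha}, since $d$ and $n$ are unchanged. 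This is the whole point: the improvement in the Avakumovic--H\"ormander remainder is a property of the rank and the Weyl chamber geometry, both of which Spin inherits verbatim from $\mathbf{SO}$.

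The only genuine bookkeeping — and the likeliest place for a miscalculation — is checking that the leading coefficient matches $\omega_d {\rm Vol}_g({\bf Spin}(N))/(2\pi)^d$. Here one uses that $\mathbf{Spin}(N) \to \mathbf{SO}(N)$ is a double cover, so ${\rm Vol}_g({\bf Spin}(N)) = 2\,{\rm Vol}_g({\bf SO}(N))$, while dually the weight lattice satisfies $[\mathscr{P}:\mathscr{A}]=2$, giving ${\rm Vol}(\mathscr{P}^*) = 2\,{\rm Vol}(\mathscr{A}^*) = 2$. These two factors of $2$ combine cleanly with the $\mathbf{SO}(N)$ leading-term identity $|W|^{-1}\int_{\B^n_1} m_n\,dx = \omega_d {\rm Vol}_g({\bf SO}(N))/(2\pi)^d$ to produce the claimed constant. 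The main obstacle is not analytic but representation-theoretic: confirming that passing from $\mathscr{A}$ to $\mathscr{P}$ (i.e.\ including the genuinely spinor representations) does not alter the degree of $m_n$ nor introduce new singularities on $\partial \mathscr{C}_+$ — both facts being immediate from Weyl's dimension formula applied with the same $\rho$.
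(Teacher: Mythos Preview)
Your argument is correct and runs parallel to the paper's, with one operational difference. The paper reduces to the ${\bf SO}(N)$ counting function by \emph{rescaling}: writing each weight $y$ for ${\bf Spin}(N)$ as $x/2$ with $x\in\mathscr{A}_{{\bf SO}(N)}$, invoking the homogeneity of $m_n$ to obtain $\mathscr{M}_{{\bf Spin}(N)}(R) = 2^{-2l}\mathscr{M}_{{\bf SO}(N)}(2R)$, and then quoting the already-established ${\bf SO}(N)$ asymptotics. You instead apply Theorem~\ref{radial} (in its shifted form) directly to the full-rank lattice $\Gamma=\mathscr{P}$. Your route is slightly more direct and bypasses the rescaling device, at the cost of having to identify ${\rm Vol}(\mathscr{P}^*)$ explicitly; the paper's route has the advantage that the remainder exponent is inherited verbatim from the ${\bf SO}(N)$ computation without re-running Theorem~\ref{radial}.

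One bookkeeping discrepancy is worth flagging. You (correctly) use $[\mathscr{P}:\mathscr{A}]=2$, consistent with $\pi_1({\bf SO}(N))\cong\Z_2$, and ${\rm Vol}_g({\bf Spin}(N)) = 2\,{\rm Vol}_g({\bf SO}(N))$, the factor $2$ reflecting that the double cover is a local isometry for the common bi-invariant metric. The paper instead takes $\mathscr{P}=\frac{1}{2}\Z^n$ and arrives at the relation ${\rm Vol}({\bf Spin}(N)) = 2^n\,{\rm Vol}({\bf SO}(N))$. For the root systems $B_n$ and $D_n$ the weight lattice is in fact $\Z^n\cup\bigl(\Z^n+\tfrac{1}{2}(1,\dots,1)\bigr)$, of index $2$ over $\mathscr{A}=\Z^n$, so your factors are the ones consistent with the covering geometry; the asymptotic statement of the theorem is unaffected either way, since the leading coefficient is in any case dictated by Weyl's law.
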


\pf As eluded to in the discussion prior to the theorem in virtue of $\G={\bf Spin}(N)$ being the universal covering group of ${\bf SO}(N)$ 
the two share the same root system, and the set of analytically dominant weights $\mathscr{A} \cap \mathscr{C}_+$ for ${\bf Spin}(N)$ are,
\begin{equation}
\mathscr{A} \cap \mathscr{C}_+  = \bigg\{ \xi + \varepsilon (1,\cdots,1) : \xi \in \mathscr{A}_{{\bf SO}(N)} \cap \mathscr{C}_+, \,\, \varepsilon=0 \text{  or  } 
\varepsilon=\frac{1}{2} \bigg\} = \mathscr{P}_{{\bf SO}(N)}\cap \mathscr{C}_+. 
\end{equation}

The root systems of ${\bf Spin}(N)$ and ${\bf SO}(N)$ being the same implies that the multiplicity function $m_n$ for the two are the 
same homogenous polynomial.
In particular for each $y\in \mathscr{A} \cap \mathscr{C}_+$ we can write $y=x/2$ for some $x \in \mathscr{A}_{{\bf SO}(N)} \cap \mathscr{C}_+$ 
and so by homogeneity $m_n(y)= 2^{-2l} m_n(x)$ (note that $d=n+2l$ where $n$ is the rank). Therefore the 
counting function of ${\bf Spin}(N)$ relates to the counting function of ${\bf SO}(N)$ by rescaling, specifically,  
\begin{align}
\mathscr{M}_{{\bf Spin}(N)}(R) &= 2^{-2l} \mathscr{M}_{{\bf SO}(N)}(2R) \nonumber \\
&= 2^n \frac{\omega_d {\rm Vol}_g({\bf SO}(N))}{(2\pi)^d}  R^{d} + O \left( R^{d-1-\frac{n-1}{n+1}} \right).  
\end{align}

Now we reach the desired conclusion by invoking the relation $R= \sqrt{\lambda + \norm{\rho}^2}$ and hence obtaining
\begin{equation} \label{scf-Spin-SO-equation}
\mathscr{N}(\lambda) = 2^n \frac{\omega_d {\rm Vol}_g({\bf {\bf SO}}(N))}{(2\pi)^d} \lambda^{\frac{d}{2}} 
+ O \left( \lambda^{\frac{d-1}{2}- \frac{1}{2} \frac{n-1}{n+1}} \right)  
\end{equation}
and making use of the relation $2^n {\rm Vol}({\bf SO}(N)) = {\rm Vol}({\bf Spin}(N))$. \hfill $\square$ 
\\[1 mm]

Regarding the last relation in the above proof we
note that the volume of any compact Lie group $\G$ is given by
\begin{equation}
{\rm Vol}(\G) = \frac{(2\pi)^{n+l}}{{\rm Vol}(\mathscr{A}) \times Q}, \qquad Q= \prod_{\alpha \in {\mathfrak R}^+} (\alpha,\rho), 
\end{equation} 
where ${\rm Vol}(\mathscr{A})$ is the volume of the fundamental domain in the lattice of analytical weights. Then as 
${\bf Spin}(N)$ and ${\bf SO}(N)$ share the same root system and ${\rm rank}(\G)=n$ 
the above claim follows upon noting that ${\rm Vol}(\mathscr{A}_{\mathbf{SO}(N)})=1$ and ${\rm Vol}(\mathscr{A}_{{\bf Spin}(N)})= 2^{-n}$.

\section{Weyl's Law for the Orthogonal and Unitary groups: A sharp result} \label{SecFive}
\setcounter {equation}{0}

In this final section we present a result on the sharp asymptotics of the remainder term for the spectral counting function of the orthogonal and unitary 
groups as in the previous sections. Here we assume for technical reasons that the rank of the group is strictly greater than four.

\begin{theorem}\label{sharpthm}
Let $\G$ denote one of the unitary, orthogonal or spinor groups as above. Then provided that $n={\rm rank}(\G)\geq 5$ we have 
\begin{align}\label{sharp}
\mathscr{N}(\lambda; \G) = \frac{\omega_{d} {\rm Vol}(\G)}{(2\pi)^{d}} \lambda^{\frac{d}{2}} + O(\lambda^{\frac{d-2}{2}}), \qquad \lambda \nearrow \infty, 
\end{align} 
where as before $d=\dim(\G)$. 
\end{theorem}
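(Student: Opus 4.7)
The plan is to sharpen Theorem \ref{radial} so that, when the rank $n=\mathrm{rank}(\G)\ge 5$, the remainder in the weighted lattice point count $\mathscr{M}(R)$ becomes $O(R^{d_0+n-2})$ in place of $O(R^{d_0+n-2/(n+1)})$, where $d_0=d-n$ is the degree of the multiplicity polynomial $m_n$. Once this is in hand, the identification $\mathscr{N}(\lambda;\G)=\mathscr{M}^\rho(R)$ from Section \ref{SecFour} with $R=\sqrt{\lambda+\|\rho\|^2}$ converts it directly into \eqref{sharp}, since $R^d=\lambda^{d/2}+O(\lambda^{(d-2)/2})$. All four families $\mathbf{SO}(N)$, $\mathbf{U}(N)$, $\mathbf{SU}(N)$, $\mathbf{Spin}(N)$ reduce to this common lattice setting via the analysis of Sections \ref{SecThree}--\ref{SecFour}, so it suffices to establish the sharpened remainder abstractly.

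My approach is to decompose the multiplicity polynomial $m_n$ into its solid spherical harmonic summands
\begin{equation*}
m_n(x) \;=\; \sum_{k=0}^{l} |x|^{2k}\, H_{2l-2k}(x),
\end{equation*}
where each $H_j$ is a solid harmonic of degree $j$, and treat each piece separately. For the purely radial part $(j=0)$ I would use Abel summation against the classical sharp count $\#\{y\in\Gamma:|y|\le r\}=\omega_n\,\mathrm{Vol}(\Gamma^\star)\,r^n+O(r^{n-2})$ -- available in the regime $n\ge 5$ by the Fricker bound recorded in Table \ref{table delta} -- which produces both the volume leading term and a remainder of size $O(R^{2k+n-2})$. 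For each non-radial summand $(j\ge 1)$ the integral $\int_{\B^n_1}|x|^{2k}H_j(x)\,dx$ vanishes by orthogonality of spherical harmonics, so no main term is contributed and only the error $O(R^{2k+j+n-2})$ needs to be controlled.

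To handle these non-radial pieces I would revisit the Poisson summation step of the proof of Theorem \ref{radial} without introducing the mollifier. The Fourier transform factorises as $\widehat{|x|^{2k}H_j\chi_1}(\xi)=\widehat{f_{k,j}}(|\xi|)\,H_j(\xi)$, and the Bessel decay $|\widehat{f_{k,j}}(|\xi|)|=O(|\xi|^{-(n+1)/2-d_0+2k})$ established in Section \ref{SecThree}, combined with the additional cancellation of $H_j$ summed along the dual lattice, should deliver, in dimensions $n\ge 5$, absolute convergence of the resulting dual sum and a bound of the correct order $R^{d_0+n-2}$. In the regime $n\le 4$ the same dual sums only converge conditionally, which is precisely why the mollifier had to be introduced in the proof of Theorem \ref{radial}.

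The principal obstacle is the quantitative treatment of the spherical harmonic weighted sums on the dual lattice, namely uniform bounds of the form $\sum_{\xi\in\Gamma^\star,\,|\xi|\le T}H_j(\xi)=O(T^{n-2+\varepsilon})$ for each $j\ge 1$. This is precisely the step where the threshold $n\ge 5$ enters, mirroring the classical Fricker threshold for the unweighted ball count. Once secured for each harmonic summand, combining with the Abel summation analysis of the radial part and summing over the finitely many harmonic components of $m_n$ closes the argument and produces the improved remainder $O(R^{d_0+n-2})$, whence \eqref{sharp} follows.
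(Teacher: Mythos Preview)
Your decomposition into radial and harmonic pieces is a reasonable starting point, and your treatment of the purely radial summand via Abel summation against the Fricker bound for $n\ge 5$ is sound; in fact it is cleaner than what the paper does, which re-derives the weighted radial count $\sum_{\omega\in\Z^n}|\omega|^{2m}\chi_R(\omega)$ from scratch via Jacobi's four-square formula and an induction on dimension (Theorem \ref{SharpM}).

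The genuine gap is in your handling of the non-radial harmonic pieces. Removing the mollifier in the Poisson step does not yield an absolutely convergent dual sum: the pointwise Bessel bound from Section~\ref{SecThree} gives only $|\widehat{F_1}(R\xi)|=O((R|\xi|)^{-(n+1)/2})$, and $\sum_{\xi\in\Gamma^\star\setminus\{0\}}|\xi|^{-(n+1)/2}$ diverges in every dimension. You therefore need conditional convergence driven by cancellation, which you locate in the partial sums $\sum_{|\xi|\le T}H_j(\xi)$. But the exponent you claim, $O(T^{n-2+\varepsilon})$, is both stronger than what you actually need and not supported by any result you cite; moreover, Abel summation on the dual side must also contend with the oscillatory (not decaying) derivative of the Bessel factor $g(R|\xi|)$, so even a correct partial-sum bound would not close the argument as stated. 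In short, the dual-side route without mollification is not merely incomplete but faces a real convergence obstruction.

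The paper avoids this entirely by working on the direct side: it invokes the quantitative equidistribution of $\Z^n$ projected onto $\Sp^{n-1}$ due to Pommerenke and Malyshev, namely
\[
\biggl|\frac{1}{r_n(k)}\sum_{|\omega|^2=k}P(\omega/|\omega|)-\frac{1}{|\Sp^{n-1}|}\int_{\Sp^{n-1}}P\biggr|\le \frac{C}{k^{(n-1)/4}},
\]
together with $r_n(k)=O(k^{(n-2)/2})$ for $n\ge 5$, to show directly that $|\mathscr{N}(\lambda)-\kappa|W|^{-1}\mathscr{E}(R)|=O(R^{2m+(n+1)/2})$, which is within the target $O(R^{d-2})$ precisely when $n\ge 5$. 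This equidistribution input is the missing ingredient in your outline; once you have it, the natural place to use it is on the direct side (summing over spheres $|\omega|^2=k$), not via the unmollified dual sum.
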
   
The principle idea of the proof is to approximate the counting function $\mathscr{N}(\lambda)$ by an alternative one with radial weight which is easier 
to tame. Recall that the multiplicity functions, given earlier in the paper, are homogeneous polynomials of even degree and as such can be written as 
$m_n(x) = \abs{x}^{2m}P(x/\abs{x})$. This homogeneity permits the forthcoming description of $\mathscr{N}(\lambda)$ where $n={\rm rank}(\G)$ and 
$r_n(k) =  \abs{\lbrace \omega\in\Z^n: \abs{\omega}^2=k \rbrace}$. Indeed 
\begin{align}
\mathscr{N}(\lambda) = \frac{1}{\abs{W}} \sum_{k=1}^{R^2} k^m r_n(k) \left(\frac{1}{r_n(k)} \sum_{\theta_j \in H_k} P(\theta_j) \right), \label{ave}
\end{align}
where $H_k = \lbrace \theta_j = x_j/|x_j| : x_j\in \Z^n \text{   and   } \abs{x_j}^2=k \rbrace$. The form \eqref{ave} is suggestive towards a natural 
approximation by
\begin{align} \label{MR}
\EE(R) =   \sum_{\omega\in\Z^n}\abs{\omega}^{2m} \chi_{R}(\omega) = \sum_{k=1}^{R^2} k^m r_n(k).
\end{align}
It is clear that any sensible approximation of $\mathscr{N}(\lambda)$ via $\mathscr{E}(R)$ necessitates that the averaged sum in \eqref{ave} 
converges. To this end we recall the uniformly distributed nature of $\Z^n$ projected onto the unit sphere $\Sp^{n-1}$, giving,
\begin{align}
\frac{1}{r_n(k)} \sum_{\theta_j \in H_k} F(\theta_j) \rightarrow \frac{1}{\abs{\Sp^{n-1}}}\int_{\Sp^{n-1}} F(\theta) d \mathcal{H}^{n-1}(\theta),
\end{align}
as $k\rightarrow \infty$ for any $F\in {\bf C}(\Sp^{n-1})$. Through the work of C. Pommerenke $\cite{Pom}$ and 
A.V. Malyshev $\cite{Malysev}$, it is even known that for $F\in {\bf C}^{2m}(\Sp^{n-1})$ we have the quantitative estimate,
\begin{align*}
\biggl |\frac{1}{r_n(k)} \sum_{\theta_j \in H_k} F(\theta_j) - \frac{1}{\abs{\Sp^{n-1}}}\int_{\Sp^{n-1}} F(\theta) d \mathcal{H}^{n-1}(\theta)\biggl| 
\leq \frac{c(m,n)}{k^{\frac{n-1}{4}}} \norm{\Delta^m_{\Sp^{n-1}} F}_{L^1(\Sp^{n-1})},
\end{align*}
provided $m>n-1$. \footnote{For more on approximation results of this nature the reader is referred to $\cite{Freeden}$ pp.~187-192.} 
Therefore as $P(\theta)$ is the restriction to $\Sp^{n-1}$ of a homogeneous polynomial we have that,
\begin{align*}
\biggl |\frac{1}{r_n(k)} \sum_{\theta_j \in H_k} P(\theta_j) - \kappa \biggl|\, \leq \frac{ c(m,n,P)}{k^{\frac{n-1}{4}}},
\end{align*}
where $\kappa = {\abs{\Sp^{n-1}}}^{-1}\int_{\Sp^{n-1}} P(\theta) d \mathcal{H}^{n-1}(\theta)$. Consequently
\begin{align}
\abs{\NN(\lambda) - \kappa {\abs{W}}^{-1}\, \EE(R)} \leq \frac{1}{\abs{W}} \sum_{k=0}^{R^2}k^{m} r_n(k) \frac{ c(m,n,P)}{k^{\frac{n-1}{4}}}.
\end{align}
Combining this with $r_n(k) = O(k^{\frac{n-2}{2}})$ for $n \geq 5$, {\it see} \cite{Freeden}, grants that,
\begin{align}
\abs{\NN(\lambda) - \kappa \abs{W}^{-1}\EE(R)}  = O\left( R^{2m+\frac{n+1}{2}} \right). \label{approx}
\end{align}  
Therefore once we have a result analogous to Theorem \ref{sharpthm} for $\EE(R)$ we shall obtain \eqref{sharp} as a simple repercussion 
of the above approximation estimate. 
\begin{theorem}\label{SharpM}
Let $\EE=\EE(R)$ be as in \eqref{MR}. Then provided $n\geq 5$ we have the asymptotics
\begin{align}
\EE(R) = \frac{{\rm Vol}(\Sp^{n-1})}{2m+n} R^{2m+n} + O(R^{2m+n-2}), \qquad R \nearrow \infty. \label{MRThm}
\end{align}
\end{theorem}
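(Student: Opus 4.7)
The plan is to reduce the weighted lattice sum $\EE(R)=\sum_{k=1}^{\lfloor R^2\rfloor} k^m r_n(k)$ to the classical unweighted Gauss lattice-point count
\begin{equation*}
S(t) = \sum_{k \le t} r_n(k) = \sharp\{\omega\in\Z^n : |\omega|^2 \le t\},
\end{equation*}
and then to invoke the sharp asymptotic that is available precisely in the regime $n\ge 5$. The crucial ingredient is Fricker's theorem, namely the entry $n \ge 5$ in Table \ref{table delta} with $\delta=(n-2)/2$ and $\zeta=0$, giving
\begin{equation*}
S(t) = \omega_n\, t^{n/2} + O\!\bigl(t^{(n-2)/2}\bigr), \qquad t \nearrow \infty.
\end{equation*}
The softer bound coming out of Theorem \ref{radial}, with exponent $n-2/(n+1)$, would not be enough for the sharp $(d-2)/2$ exponent sought here; Fricker's estimate is what drives the whole argument.

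The next step is summation by parts. Writing $K=\lfloor R^2\rfloor$ and applying Abel summation to $\sum_{k=1}^K k^m \bigl(S(k)-S(k-1)\bigr)$ gives
\begin{equation*}
\EE(R) = K^m S(K) - \sum_{k=1}^{K-1} \bigl((k+1)^m - k^m\bigr) S(k) + O(1).
\end{equation*}
Substituting the sharp expansion for $S$ and using the elementary identity $(k+1)^m - k^m = mk^{m-1}+O(k^{m-2})$, the boundary term contributes $\omega_n K^{m+n/2}+O(K^{m+n/2-1})$, while the sum contributes $m\omega_n \sum_{k=1}^{K-1} k^{m+n/2-1}$ modulo an error of $O(K^{m+n/2-1})$. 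A comparison with the integral $\int_0^K t^{m+n/2-1}\,dt$ evaluates this power-sum as $K^{m+n/2}/(m+n/2) + O(K^{m+n/2-1})$.

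Combining the two contributions yields
\begin{equation*}
\EE(R) = \omega_n K^{m+n/2}\left(1-\frac{m}{m+n/2}\right) + O\!\bigl(K^{m+n/2-1}\bigr) = \frac{n\omega_n}{n+2m}\,K^{m+n/2} + O\!\bigl(K^{m+n/2-1}\bigr).
\end{equation*}
Replacing $K$ by $R^2$ (with the discrepancy $R^2-K=O(1)$ absorbed into the remainder) and using the identity $n\omega_n = {\rm Vol}(\Sp^{n-1})$ produces the asserted \eqref{MRThm}. The one genuinely deep input is Fricker's sharp Gauss-sphere bound; granted that, the remainder of the argument is routine bookkeeping built on a single summation by parts and the comparison of a power-sum with its integral.
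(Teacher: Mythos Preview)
Your argument is correct and takes a genuinely different, considerably shorter, route than the paper.

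The paper proves Theorem~\ref{SharpM} by first establishing an explicit formula for $\EE_4(R)$ via Jacobi's four-square theorem and the divisor function, then slicing $\EE_5(R)=\sum_{j=-R}^R \EE_4^{f_j}(\sqrt{R^2-j^2})$ and controlling the resulting error sums $H(t),L(t)$ by hand (using an estimate $M(t,n)=O(\sqrt t)$ from \cite{Fricker}), finishing the leading term with Euler--Maclaurin, and finally inducting from $n$ to $n+1$. In effect the paper reconstructs the sharp Gauss-sphere bound in the weighted setting from the ground up.

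You instead take the unweighted sharp bound $S(t)=\omega_n t^{n/2}+O(t^{(n-2)/2})$ for $n\ge 5$ directly from Table~\ref{table delta} (Fricker) as a black box and reduce the weighted sum to it by a single Abel summation. This is entirely legitimate: the paper already records and cites that bound, and once it is in hand the passage from $S$ to $\EE$ is exactly the elementary computation you give. Your approach makes transparent that the weighted result \eqref{MRThm} is an immediate corollary of the classical unweighted one, at the cost of treating Fricker's theorem as input rather than (partially) rederiving it. Given that the paper's own proof also leans on \cite{Fricker} at a key step, your route loses nothing in rigour and gains a good deal in economy.
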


\begin{proof}
The proof of this result is an adaptation of the classical lattice point counting argument with constant weight, i.e., $m=0$. Indeed the explicit form of $r_4(k)$, i.e., 
the Jacobi sum of four square formula, gives a weaker result for $\EE(R)$ on $\Z^4$. More precisely,
\begin{align}
\EE_4(R) = & \int_{\B_R} \abs{x}^{2m} \, dx+ \nonumber \\
&  \frac{16/R^2}{{\rm Vol}(\Sp^{3})}\int_{\B_R} \abs{x}^{2m} dx \cdot \left[ D(R^2) - D({R^2}/{4}) \right]  + O(R^{2m+2}), \label{useful2}
\end{align}
where $D(t)= \sum_{k\leq t} k^{-1} \psi (t/k)$ with $\psi(t/k)=t/k - [ t/k ] -1/2$. To prove \eqref{useful2} one can firstly show that ({\it cf.} \cite{Fricker} pp.~34-5)
 \footnote{In \cite{Fricker} the identity \eqref{MS} is proved when $m=0$. However the case $m>0$ is similar modulo suitable adjustments to essentially account 
for $m \ne 0$. Therefore to avoid repetition with an existing text we shall simply refer the reader to \cite{Fricker}.}
\begin{align}
\EE_4(R)  =8 S_m(R^2) - 4^m \times 32\, S_m\left(\frac{R^2}{4}\right), \label{MS}
\end{align}  
where $S_m(t) = \sum_{k=1}^t k^m \sigma(k)$ and $\sigma(k)$ is the classical divisor function, i.e., $\sigma (k) = \sum_{d|k}d$. Notice that any pair of integers 
$(d,j)$ such that $1\leq d \leq t$ and $1\leq j \leq [ t/d ] = t/k - \psi(t/k) - 1/2 =\zeta(d,t)$ are divisors of $d\cdot j$ where $1\leq d \cdot j  \leq t$. Hence,
\begin{align*}
S_m(t) &= \sum_{k\leq t} k^m \sum_{d|k} d =  \sum_{k\leq t} \sum_{j\leq \zeta(k,t)} (jk)^m j = \sum_{k\leq t} \frac{k^m}{m+2} \sum_{j=0}^{m+2} c_{m,j} \zeta(k,n)^{m+2-j},
\end{align*}
where the last equality comes from Faulhaber's formula with $c_{m,j}=(-1)^j {m+2 \choose j} B_j$ and $B_j$ are the Bernoulli numbers. Moreover,
\begin{align}
\zeta(k,t)^m = \left(\frac{t}{k}\right)^m - \left(\frac{t}{k}\right)^{m-1}\left(\psi(\frac{t}{k})+1/2\right) +  O\left( \frac{t^{m-2}}{k^{m-2}} \right).
\end{align}  
As a result,
\begin{align}
S_m(t) &= \sum_{k\leq t} \frac{k^m}{m+2} \sum_{j=0}^{m+2} c_{m,j} \left(\frac{t}{k}\right)^{m+1-j}\zeta(k,t) + O(t^{m+1}) \nonumber\\
& = \frac{t^m}{m+2}\sum_{k\leq t}\left(\frac{t}{k}+ c_{m,1}\right)\zeta(k,t) + O(t^{m+1}).  \label{Sm2}
\end{align}
Now $\sum_{k\leq t} t/k \zeta(k,t) = t^2\sum_{k\leq t} k^{-2} - tD(t) - t/2 \sum_{k\leq t} k^{-1} $ 
and $\sum_{k\leq t} \zeta(k,t) = t\sum_{k\leq t}k^{-1} + O(t)$ which permits \eqref{Sm2} to be rewritten as (with $c=c_{m,1}-2^{-1}$)
\begin{align}\label{Sm3}
S_m(t) & = \frac{t^{m+2}}{m+2}\sum_{k\leq t}\frac{1}{k^2} +  \frac{t^{m+1}}{m+2}\left[ c\sum_{k\leq t} k^{-1} -  D(t) \right] + O(t^{m+1}).  
\end{align}
Next substituting \eqref{Sm3} into \eqref{MS} with $\sum_{k\leq R^2}k^{-1}-\sum_{k\leq R^2/4}k^{-1}=O(1)$ and using 
$1+ \cdots + 1/N^2 = \pi^2/6 + O(N^{-1})$ gives  
\begin{align}
\EE_4(R) = \frac{ R^{2m+4} \pi^2}{m+2} - \frac{8R^{2m+2}}{m+2} \left[ D(R^2) - D({R^2}/{4}) \right] + O(R^{2m+2}). \label{MRfor4dim}
\end{align}
Thus we have proved \eqref{useful2} upon identifying the coefficients in \eqref{MRfor4dim} with the integrals given in \eqref{useful2}. 
Now \eqref{MRThm} is obtained by firstly writing,
\begin{align}
\EE_5(R) & = \sum_{\omega \in \Z^5} \abs{\omega}^{2m} \chi_R(\omega) = \sum_{j=-R}^{R} \EE_4^{f_j}(\sqrt{R^2-j^2}). \label{basecase}
\end{align}
Here $\EE_4^{f_j}(t)$ denotes the counting function on $\Z^4$ with weight $f(\omega,j)= (\abs{\omega}^2+j^2)^m$ which is a sum 
of radial weights. Hence \eqref{useful2} in \eqref{basecase} produces,
\begin{align}
\EE_5(R) & = \sum_{j=-R}^{R} \int_{\B_{r(R^2,j)}} f(x,j)\, dx - 16 \left[ H(R^2) - L(R^2)\right] + O(R^{2m+3}), \label{E5}
\end{align}
where we have defined $r(t,j)=\sqrt{t-j^2}$ and \footnote{The remainder term here accounts for $j=0$ which is 
$t^{m+1}D(t) = O(t^{m+1}\ln t)$ since $D(t)=O(\ln t)$, however, in what follows we show that $H(t),L(t)=O(t^{m+3/2})$ 
and therefore we can omit this additional remainder as it will be adsorbed into this asymptotics.},
\begin{align}
H(t) &= \sum_{j=1}^{[ \sqrt{t}]} F(t,j)D(t-j^2) + O(t^{m+1}\ln t), \\
L(t) &= \sum_{j=1}^{[ \sqrt{t}]} F(t,j)D((t-j^2)/4)  +  O(t^{m+1}\ln t),
\end{align}
and $F(t,j)= 2{r(t,j)^{-2}} {\rm Vol}(\Sp^3)^{-1}\int_{\B_{r(t,j)}} f(x,j) \, dx $. Moreover we have 
\begin{align}
H(t) & =  \sum_{j=1}^{[ \sqrt{t} ] -1} \left[F(t,j)-F(t,j+1) \right]M(t,j) + F(t,[ \sqrt{t} ]  )M([t, \sqrt{t}] ) 
\end{align}
with $M(t,n) = \sum_{i=1}^n D(t-i^2)$. This particular decomposition of $H(t)$ lends itself favourably to estimates for large $t$. 
Towards this end let us begin by noting $M(t,n) = O(\sqrt{t})$ ({\it see} \cite{Fricker} pp.~97). Furthermore a direct calculation gives 
\begin{align}
F(t,j) = \sum_{k=0}^m {m\choose k} (k+2)^{-1} j^{2(m-k)}(t-j^2)^{k+1}. \label{F}
\end{align} 
Thus $F(t,[ \sqrt{t} ] )M(t,[ \sqrt{t} ])  = O(t^{m+3/2})$. Additionally a straightforward application of the binomial expansions in \eqref{F} leads to,
\begin{align}
F(t,j)-F(t,j+1) = \sum_{k=1}^m {m\choose k} (k+2)^{-1} \sum_{i=0}^{k+1} \sum_{l=0}^{b_{i,k}} c_{i,k,l} j^{l} t^{k-i+1}, \label{Fdiff}
\end{align}  
where $b_{i,k} = 2i + 2(m-k)-1$ and 
\begin{equation*}
c_{i,k,l} = (-1)^{i+1} {k+1 \choose i} {2i+2(m-k) \choose k}.
\end{equation*} 
With \eqref{Fdiff} at hand and again $M(t,n) = O(\sqrt{t})$ we obtain $H(t) = O(t^{m+3/2})$. Then in a similar fashion to the above 
$L(t) = O(t^{m+3/2})$ which in conjunction with \eqref{E5} gives 
\begin{align}
\EE_5(R) & = \sum_{j=-R}^{R} \int_{\B_{r(R^2,j)}} f(x,j)\, dx + O(R^{2m+3}). 
\end{align}
To complete the proof for the base case $n=5$ we are left with showing that the leading term is as in (\ref{MRThm}). To achieve this we first 
apply the classical Euler-Maclaurin summation formula, namely,
\begin{align}
\sum_{j=-R}^{R} g(j) = \int_{-R}^{R} g(y) \, dy + \int_{-R}^R \dot{g}(y)\psi(y) \, dy, \qquad g(y) = \int_{\B_{r(R^2,y)}} f(x,y) \, dx. \label{leadingg}
\end{align}
Now recall that $f(x,y) = (\abs{x}^2+y^2)^m$ and so from a straightforward calculation 
\begin{align}
g(y) &= 2\pi^2 \left[  \frac{R^{2m+4}}{m+2} + \frac{y^{2m+4}}{(m+1)(m+2)} - \frac{y^2 R^{2m+2}}{m+1}  \right], \\
\dot{g}(y) &=   {4\pi^2}\left[\frac{{y^{2m+3}}- {y R^{2m+2}}}{m+1}  \right].
\end{align} 
Hence applying the Mean-value theorem to the second integral in \eqref{leadingg} gives
\begin{align}
\int_{-R}^R \dot{g}(y)\psi(y) \, dy &= \frac{4\pi^2}{m+1} \int_{-R}^{R} \left[{y^{2m+3}} - {y R^{2m+2}} \right]\psi(y) dy \nonumber \\
& =   \frac{4\pi^2R^{2m+3}}{m+1} \left[ \int_{\xi_1}^R \psi(y) dy -  \int_{\xi_2}^R \psi(y) dy \right] \nonumber \\
& =  \frac{4\pi^2R^{2m+3}}{m+1} \int_{\xi_1}^{\xi_2} \psi(y) dy = O(R^{2m+3}),
\end{align}
(here assuming without loss of generality that $-R<\xi_1\leq \xi_2 <R$). Then setting $\omega=(x,y)$, 
\begin{align*}
\int_{-R}^R g(y) \, dy = \int_{-R}^R \int_{\B^4_{r(R^2,y)}} (\abs{x}^2+y^2)^m\, dx \, dy = \int_{\B^5_R} \abs{\omega}^{2m} \, d\omega 
=\frac{{\rm Vol}(\Sp^4)}{2m+5}R^{2m+5}. 
\end{align*}
Thus summarising we have succeeded in proving the base case
\begin{align}
\EE_5(R) = \frac{{\rm Vol}(\Sp^4)}{2m+5}R^{2m+5}  + O(R^{2m+3}).
\end{align}
Hence we can prove $(\ref{MRThm})$ for $n\geq 5$ by induction. Indeed assume \eqref{MRThm} is true for $n$. Then as before
\begin{align}
\EE_{n+1}(R) &= \sum_{j=-R}^R  \sum_{w\in \Z^{n}} \left( \abs{x}^2+j^2\right)^m \chi_{\sqrt{R^2-j^2}}(w)  \nonumber\\
&=  \sum_{j=-R}^R \int_{\B^n_{r(R^2,j)}} f(w,j) dw + O\left(  \sum_{j=1}^R (R^2-j^2)^{2m+{n-2}}   \right) \nonumber\\
&= \frac{{\rm Vol}(\Sp^n)}{2m+n+1} R^{2m+n+1} + O\left(  R^{2m+n-1}   \right).
\end{align}
Therefore $(\ref{MRThm})$ holds for $n+1$ and thus is true for all $n\geq 5$. Note that the leading term is obtained in exactly the 
same way as the base case $n=5$ by applying the Euler-Maclaurin summation formula and the Mean value theorem.
\end{proof}

The proof of Theorem $\ref{sharpthm}$ is now a consequence of the argument preceding Theorem \ref{SharpM}. 
This can be done for $\mathbf{SO}(N)$ and $\mathbf{U}(N)$ since the lattice of analytic weights here is 
realised as $\Z^n$ whilst the multiplicity function is a homogeneous polynomial of even degree. The case of 
$\mathbf{Spin}(N)$ results from $\mathbf{SO}(N)$ as in the proof of Theorem \ref{spin}.

Note that for $n \ge 5$ we have $(n+1)/{2} \leq n-2$ and so \eqref{approx} combined with Theorem $\ref{SharpM}$ gives 
\begin{align}
\NN(\lambda) &= \kappa\frac{{\rm Vol}(\Sp^{n-1})}{(2m+n)\abs{W}} \left(\lambda + \abs{\rho}^2\right)^{m+\frac{n}{2}} 
+ O(\lambda^{m+\frac{n-2}{2}}) \nonumber \\
&= \kappa\frac{{\rm Vol}(\Sp^{n-1})}{(2m+n)\abs{W}} \lambda^{m+\frac{n}{2}} + O(\lambda^{m+\frac{n-2}{2}}).
\end{align}
The leading coefficient can be written in the following form:
\begin{align}
\kappa\frac{{\rm Vol}(\Sp^{n-1})}{(2m+n)\abs{W}} &= \frac{1}{{(2m+n)\abs{W}}} \int_{\Sp^{n-1}} 
P(\theta) d \mathcal{H}^{n-1}(\theta) \nonumber \\
&= \frac{1}{{\abs{W}}}\int_0^1 \int_{\Sp^{n-1}} P(\theta) d \mathcal{H}^{n-1}(\theta) r^{2m+n-1} dr \nonumber \\
&= \frac{1}{{\abs{W}}} \int_{\B_1^n} m_n(x) dx = \frac{{\rm Vol}\,(\G) \, \omega_d}{(2\pi)^d}. 
\end{align}
Hence we have completed the proof of Theorem \ref{sharpthm}. \hfill $\square$

\appendix

\section{Asymptotics of weighted integrals involving Bessel functions} \label{Appendix}
\setcounter {equation}{0}

In this appendix we present the proof of an estimate used earlier in the paper.  
This is concerned with the asymptotics of Bessel functions and their weighted integrals.

\begin{proposition} \label{Bessel-estimate-app}
Let $\alpha \ge 2$ and $\beta >-1/2$. Then there exist constants $M>0$ and $c>0$ such that for any $z \geq M$ we have 
\begin{equation}
\int_0^z t^{\alpha+\beta} J_{\beta}(t) \, dt \leq c z^{\alpha+\beta-\frac{1}{2}}. 
\end{equation}

\end{proposition}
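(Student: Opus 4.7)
The plan is to integrate by parts using the classical Bessel recurrence
\begin{equation*}
\frac{d}{dt}\bigl[t^{\nu} J_{\nu}(t)\bigr] = t^{\nu} J_{\nu-1}(t),
\end{equation*}
and then dominate the resulting boundary term and residual integral by the large-argument bound $|J_{\nu}(t)| \le C\,(1+t)^{-1/2}$, valid for all $\nu > -1/2$ (this follows by combining the power-series behaviour near zero with the Hankel asymptotic expansion at infinity).

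Applying the identity with $\nu = \beta + 1$ gives $(t^{\beta+1} J_{\beta+1}(t))' = t^{\beta+1} J_{\beta}(t)$. Writing the integrand as $t^{\alpha+\beta} J_\beta(t) = t^{\alpha-1}\,(t^{\beta+1} J_{\beta+1}(t))'$ and integrating by parts yields
\begin{equation*}
\int_0^z t^{\alpha+\beta} J_\beta(t)\, dt = z^{\alpha+\beta} J_{\beta+1}(z) - (\alpha-1)\int_0^z t^{\alpha+\beta-1} J_{\beta+1}(t)\, dt,
\end{equation*}
where the boundary contribution at $t=0$ vanishes since $t^{\alpha+\beta} J_{\beta+1}(t) \sim c\,t^{\alpha+2\beta+1}$ and $\alpha+2\beta+1>0$ under the hypotheses $\alpha\ge 2$, $\beta>-1/2$. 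The first term on the right is then of size $O(z^{\alpha+\beta-1/2})$ by the asymptotic bound above, which is precisely the target order.

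For the residual integral, I split the interval at $t=1$. On $[0,1]$ the integrand is continuous (since $\beta+1>1/2$ makes $J_{\beta+1}$ bounded on compact sets) and integrable, while on $[1,z]$ the bound $|J_{\beta+1}(t)| \le C\,t^{-1/2}$ gives $|t^{\alpha+\beta-1}J_{\beta+1}(t)| \le C\,t^{\alpha+\beta-3/2}$, whose integral is $O(z^{\alpha+\beta-1/2})$ because $\alpha+\beta-1/2>1>0$ under the stated hypotheses. Combining these estimates yields the required inequality. The only step requiring any real attention is the bookkeeping of the various exponents against the range $\alpha \ge 2$, $\beta > -1/2$; no deeper analytic obstacle arises, and one could alternatively proceed via the Hankel expansion $J_\beta(t) = \sqrt{2/(\pi t)}\,\cos(t - \beta\pi/2 - \pi/4) + O(t^{-3/2})$ and a single integration by parts in the oscillatory factor, obtaining the same bound by the same mechanism.
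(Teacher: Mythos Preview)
Your proof is correct and follows essentially the same route as the paper: both obtain the identity
\[
\int_0^z t^{\alpha+\beta} J_\beta(t)\,dt = z^{\alpha+\beta} J_{\beta+1}(z) - (\alpha-1)\int_0^z t^{\alpha+\beta-1} J_{\beta+1}(t)\,dt
\]
and then control each term via the decay $|J_{\beta+1}(t)|\le C\,t^{-1/2}$ for large $t$. The only cosmetic difference is that you derive the identity by a direct integration by parts using $(t^{\beta+1}J_{\beta+1})' = t^{\beta+1}J_\beta$, whereas the paper packages this as a separate lemma proved via a double integral and Fubini; your route is marginally more streamlined but the substance is identical.
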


\begin{proof}
Using the identity established in Lemma \ref{first-aux-lemma} below we can write
\begin{align*}
\int_{0}^{z} t^{\alpha+\beta} J_{\beta}(t) \, dt &= z^{\alpha+\beta}J_{\beta+1}(z) - (\alpha-1) \int_0^z t^{\alpha+\beta-1}J_{\beta+1}(t) \, dt. 
\end{align*}

Next in virtue of the asymptotic decay of Bessel function at infinity ({\it see}, e.g., Stein $\&$ Weiss\cite{Stein-Weiss}) it follows that there exists 
$M>0$ such that $J_{\beta+1}(z) \leq {cz^{-\frac{1}{2}}}$ for $z>M$. Hence we can write
\begin{align*}
\int_{0}^{z} t^{\alpha+\beta} J_{\beta}(t) \, dt \leq c_1 z^{\alpha+\beta-\frac{1}{2}} + c_2 \int_M^z t^{\alpha+\beta-\frac{3}{2}} \, dt 
\end{align*}
from which the conclusion follows at once. \end{proof}

\begin{lemma} \label{first-aux-lemma}
Let $\alpha \ge 2$ and $\beta >-1/2$ with $z \in \R$.Then the following identity holds
\begin{equation}
\int_{0}^{z} t^{\alpha+\beta} J_{\beta}(t) \, dt = z^{\alpha+\beta}J_{\beta+1}(z) - (\alpha-1) \int_0^z t^{\alpha+\beta-1}J_{\beta+1}(t) \, dt
\end{equation}
\end{lemma}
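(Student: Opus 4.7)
The plan is to derive the identity via a single integration by parts, leveraging the standard derivative identity for Bessel functions
\[
\frac{d}{dt}\bigl[ t^{\beta+1} J_{\beta+1}(t) \bigr] = t^{\beta+1} J_{\beta}(t),
\]
which is one of the classical recursion relations (see, e.g., Stein $\&$ Weiss). The integrand in the statement can be recast as
\[
t^{\alpha+\beta} J_{\beta}(t) \;=\; t^{\alpha-1}\cdot t^{\beta+1} J_{\beta}(t) \;=\; t^{\alpha-1}\,\frac{d}{dt}\bigl[ t^{\beta+1} J_{\beta+1}(t) \bigr],
\]
so the identity reduces to a direct application of integration by parts with $u = t^{\alpha-1}$ and $dv = d[t^{\beta+1}J_{\beta+1}(t)]$.

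Carrying this out yields the boundary contribution $\bigl[ t^{\alpha+\beta} J_{\beta+1}(t) \bigr]_0^z$ and the interior term $-(\alpha-1)\int_0^z t^{\alpha+\beta-1} J_{\beta+1}(t)\,dt$, which together match the right-hand side of the claimed identity, provided the lower boundary term vanishes. To justify the vanishing at $t = 0$, I would invoke the standard near-zero expansion $J_{\beta+1}(t) \sim t^{\beta+1}/(2^{\beta+1}\Gamma(\beta+2))$, which gives $t^{\alpha+\beta} J_{\beta+1}(t) = O(t^{\alpha+2\beta+1})$ as $t \to 0^+$. Under the hypotheses $\alpha \geq 2$ and $\beta > -1/2$, the exponent $\alpha + 2\beta + 1$ is strictly positive, so the boundary contribution at $0$ vanishes and only the $t = z$ endpoint survives.

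There is no real obstacle here: the entire content of the lemma is the recursion identity plus a routine integration by parts, so the only point that requires a line of care is the verification that the boundary term at the origin is zero, which is exactly where the mild assumption $\beta > -1/2$ (together with $\alpha \geq 2$) is used. I would write up the proof as a one-line display of integration by parts followed by a short remark isolating the boundary check, and that completes the argument.
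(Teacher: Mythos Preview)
Your proof is correct and is essentially the same argument as the paper's: both rest on the single Bessel identity $\frac{d}{dt}[t^{\beta+1}J_{\beta+1}(t)] = t^{\beta+1}J_\beta(t)$ (equivalently its integral form $\int_0^t s^{\beta+1}J_\beta(s)\,ds = t^{\beta+1}J_{\beta+1}(t)$). The paper substitutes this integral expression into $\int_0^z t^{\alpha+\beta-1}J_{\beta+1}(t)\,dt$ and switches the order of integration, whereas you apply integration by parts directly to $\int_0^z t^{\alpha+\beta}J_\beta(t)\,dt$; the two manipulations are equivalent, and your added remark checking the vanishing of the boundary term at $t=0$ is a small clarification the paper leaves implicit.
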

\begin{proof}

Starting from the following weighted integral identity for Bessel functions ({\it cf.}, e.g., Grafakos \cite{Grafakos} Appendix B.3)
\begin{align*}
\int_0^z J_{\beta-1}(t) t^\beta \, dt = z^{\beta} J_{\beta}(z) 
\end{align*}
we can write 
\begin{align*}
\int_0^z t^{\alpha+\beta-1}J_{\beta+1} (t) dt & = \int_0^z t^{\alpha-2} \int_0^t s^{\beta+1}J_{\beta}(s) \, ds dt 
 = \int_0^z s^{\beta+1}J_{\beta}(s) \int_s^z  t^{\alpha-2} \, dt ds. 
\end{align*}
Now integrating the term on the right gives
\begin{align*}
\int_0^z t^{\alpha+\beta-1}J_{\beta+1} (t) \, dt 
& = \frac{1}{\alpha-1}\int_0^z s^{\beta+1}J_{\beta}(s) [z^{\alpha-1}-s^{\alpha-1}] \, ds \\
& = \frac{1}{\alpha-1}z^{\alpha+\beta}J_{\beta+1}(z) - \frac{1}{\alpha-1}\int_0^z s^{\beta+\alpha}J_{\beta}(s) \, ds 
\end{align*}
and so the conclusion follows by simple manipulation of the above. 
\end{proof}

\noindent{ $\dag$ {\scriptsize DEPARTMENT OF MATHEMATICS, UNIVERSITY 
OF SUSSEX, FALMER, BRIGHTON BN1 9RF, ENGLAND, UK.}}    
\\[2 mm]

\end{document}